\theoremstyle{plain}
\newtheorem{Thm}{Theorem}[section]
\newtheorem{Prop}[Thm]{Proposition}
\newtheorem{Obs}[Thm]{Observation}
\theoremstyle{definition}
\newtheorem{Def}[Thm]{Definition}
\theoremstyle{remark}
\newcommand{\specsym}{\begin{picture}(8,8)
\scalebox{0.6} {\put(0,0){\framebox(10,10)}
\put(0,10){\line(1,-1){10}}}
\end{picture}}
\def\finf{\mathop{{\rm I}\kern -.27 em {\rm F}}\nolimits}
\begin{document}

\title{Iteration Index of a Zero Forcing Set\\ in a Graph}

\author{{\bf Kiran B. Chilakamarri$^1$}, {\bf Nathaniel Dean$^2$},\\ 
{\bf Cong X. Kang$^3$}, and {\bf Eunjeong Yi$^4$}\\
\small $^{1}$Texas Southern University, Houston, TX 77004, USA\\
$^1${\small\em chilakamarrikb@tsu.edu}\\
\small $^{2}$Texas State University, San Marcos, TX 78666, USA\\
$^2${\small\em nd17@txstate.edu}\\
\small $^{3,4}$Texas A\&M University at Galveston, Galveston, TX 77553, USA\\
$^3${\small\em kangc@tamug.edu}; $^4${\small\em yie@tamug.edu}}

\maketitle

\date{}

\begin{abstract}
Let each vertex of a graph $G=(V(G), E(G))$ be given one of two
colors, say, ``black" and ``white". Let $Z$ denote the (initial)
set of black vertices of $G$. The \emph{color-change rule}
converts the color of a vertex from white to black if the white
vertex is the only white neighbor of a black vertex. The set $Z$
is said to be \emph{a zero forcing set} of $G$ if all vertices of
$G$ will be turned black after finitely many applications of the
color-change rule. The \emph{zero forcing number} of $G$ is the
minimum of $|Z|$ over all zero forcing sets $Z\subseteq V(G)$.
Zero forcing parameters have been studied and applied to the
minimum rank problem for graphs in numerous articles. We define \emph{the iteration index of a zero forcing set of a graph $G$} to
be the number of (global) applications of the color-change rule
required to turn all vertices of $G$ black; this leads to a new graph invariant, the \emph{iteration index of $G$} -- it is the minimum of iteration 
indices of all minimum zero forcing sets of $G$. We present some basic properties of the iteration index and
discuss some preliminary results on certain graphs.
\end{abstract}

\noindent\small {\bf{Key Words:}} zero forcing set, zero forcing number, iteration index of a zero forcing set, Cartesian product of graphs, bouquet of 
circles\\

\small {\bf{2000 Mathematics Subject Classification:}} 05C50, 05C76, 05C38, 05C90\\

\section{Introduction}

The notion of a zero forcing set, as well  as the associated zero forcing number, of a simple graph was introduced in~\cite{AIM}
to bound the minimum rank for numerous families of graphs. Zero forcing parameters were further studied and applied to
the minimum rank problem in~\cite{preprint, UB, B2, Huang, Pathcover}. In this paper, we introduce and study the iteration index of a zero forcing set in 
a graph; as we'll see, this is a very natural graph parameter associated with a minimum zero forcing set of a graph.
After the requisite definitions and notations on the graphs to be considered, we'll give a brief review of the notions
and results associated with the zero forcing parameter.\\

In this paper, a graph $G = (V(G),E(G))$ has no isolated vertices and is finite, simple, and undirected.
The \emph{degree} of a vertex $v$ in $G$ is denoted by $\deg_G(v)$, and an \emph{end-vertex} is the vertex of degree one. The
minimum degree over all vertices of $G$ is denoted by $\delta(G)$. For
$S \subseteq V(G)$, we denote by $<\!S\!>$ the subgraph of $G$
induced by $S$, and we denote by $G-S$ the subgraph of $G$ induced
by $V(G) \setminus S$. For any vertex $v \in V(G)$, the \emph{open
neighborhood} of $v$ in $G$, denoted by $N_G(v)$, is the set of
all vertices adjacent to $v$ in $G$, and the \emph{closed
neighborhood} of $v$, denoted by $N_G[v]$, is the set $N_G(v) \cup
\{v\}$; we drop $G$ when ambiguity is not a concern. We denote by $P_n$, $C_n$, and $K_n$ the path,
the cycle, and the complete graph, respectively, on $n$ vertices. A
complete bipartite graph with partite sets having $p$ and $q$
vertices is denoted by $K_{p,q}$. The \textit{path cover number}
$P(G)$ of $G$ is the smallest positive integer $m$ such that there
are $m$ vertex-disjoint paths $P^1, P^2, \ldots, P^m$ in
$G$ that cover all vertices of $G$. The \textit{Cartesian product}
of two graphs $G$ and $H$, denoted by $G \square H$, is the graph
with the vertex set $V(G) \times V(H)$ such that $(u,v)$ is
adjacent to $(u', v')$ if and only if
(1) $u=u'$ and $vv' \in E(H)$ or (2) $v=v'$ and $uu' \in E(G)$. For other graph theory terminology, we refer to \cite{CZ}.\\

Let each vertex of a graph be given either the color black or the color white. Denote by $Z$ the initial set of black
vertices. The ``color-change rule" changes the color of a vertex $w$ from white to black if
the white vertex is the only white neighbor of a black vertex $u$; in this case, we may say that
\emph{$u$ forces $w$} and write $u \rightarrow w$. Of course, there may be more
than one black vertex capable of forcing $w$, but we associate only one forcing vertex to $w$ at a time. Applying the color-change rule to all vertices
of $Z$, we obtain an \textit{updated} set of black vertices $Z_1\supseteq Z$. Clearly, not all vertices in $Z$ need to be forcing
vertices, and if a vertex $u$ in $Z$ forces $w$, then $u$ becomes
inactive -- i.e., unable to force thereafter. The vertex $w$
replaces $u$ as a potential forcing vertex in $Z_1$; thus, $Z_1$ has at most $|Z|$ many
potentially forcing vertices. Applying the color change rule to $Z_1$
results in another updated set $Z_2\supseteq Z_1$ of black vertices. Continuing this process until no more color change is possible, we obtain a nested 
sequence of sets $Z=Z_0 \subseteq Z_1  \subseteq \ldots \subseteq Z_n$. The initial set $Z$ is said to be \emph{a zero forcing set} if $Z_n=V(G)$. A
``chronological list of forces" is a record of the forcing actions in the
order in which they are performed. Given any chronological list of
forces, a ``forcing chain" is a sequence $u_1, u_2, \ldots, u_t$
such that $u_i \rightarrow u_{i+1}$ for $i=1,2, \ldots, t-1$.
In consideration of all lists of forces leading from vertices in $Z=Z_0$ to all vertices in $Z_i-Z_{i-1}$, we see that $|Z_i-Z_{i-1}|\leq |Z|$ for each 
$i\in\{1,2,\ldots,n\}$.
The zero forcing number of $G$, denoted by $Z(G)$, is the minimum of $|Z|$ over all zero forcing sets $Z \subseteq V(G)$.\\

A ``maximal forcing chain" is a forcing chain that is not a
subsequence of another forcing chain. If $Z$ is a zero forcing
set, then a ``reversal" of $Z$ is the set of last vertices of
maximal forcing chains of a chronological list of forces. The following are some of the known properties of zero forcing parameters:

\begin{itemize}
\item[$\diamond$] \cite{UB} For any graph $G$, $\delta(G) \le
Z(G)$.
\item[$\diamond$] \cite{preprint} If $Z$ is a zero forcing set of
a graph $G$, then any reversal of $Z$ is also a zero forcing set
of $G$.
\item[$\diamond$] \cite{preprint} If a graph $G$ has a unique zero
forcing set, then $G$ has no edges; i.e., $G$ consists of isolated
vertices.
\item[$\diamond$] \cite{preprint} For any graph $G$, $P(G) \leq
Z(G)$.
\item[$\diamond$] \cite{AIM} For any tree $T$, $P(T)=Z(T)$.
\end{itemize}

For two graphs $G$ and $H$ such that $H \subseteq G$, one cannot
exactly determine $Z(G)$ from $Z(H)$ -- or vice versa, but the following holds.

\begin{Thm} \cite{B2}
Let $G$ be any graph. Then
\begin{itemize}
\item[(i)] For $v \in V(G)$, $Z(G)-1 \le Z(G-\{v\}) \le Z(G)+1$.
\item[(ii)] For $e \in E(G)$, $Z(G)-1 \le Z(G-e) \le Z(G)+1$.
\end{itemize}
\end{Thm}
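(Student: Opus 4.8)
The plan is to argue throughout in terms of chronological lists of forces, using repeatedly the following monotonicity principle: when a list of forces that succeeds for a graph $H$ with initial black set $S$ is replayed in the original order after we \emph{delete} some edges (none of them the edge $ac$ of a force $a\to c$ that we want to keep) and/or \emph{adjoin} some new vertices to the black set, every force $a\to c$ of the list remains legal -- deleting an edge incident to $a$ other than $ac$ only removes an already-black neighbour of $a$, and a vertex that is black from the outset is never the white neighbour that blocks a force. For each of the four inequalities the idea is to turn a minimum zero forcing set of one of the two graphs into a zero forcing set of the other of size at most one larger, and then invoke this principle.

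For the two inequalities bounding $Z(G)$, namely $Z(G)\le Z(G-\{v\})+1$ and $Z(G)\le Z(G-e)+1$, I would pass from the smaller graph to $G$. Given a minimum zero forcing set $Z$ of $G-\{v\}$, put $Z':=Z\cup\{v\}$; re-inserting $v$ together with all its edges, the forces that worked in $G-\{v\}$ replay in $G$, since the only vertex anyone gains as a neighbour, namely $v$, is black from the start, so $Z(G)\le|Z'|=Z(G-\{v\})+1$. For the edge case write $e=xy$; since $x$ and $y$ are non-adjacent in $G-e$, a chronological list for a minimum zero forcing set $Z$ of $G-e$ contains at most one force out of $x$ and at most one out of $y$, and neither of these two endpoints forces the other. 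I would adjoin to $Z$ the endpoint of $e$ whose own force (if any) is the \emph{later} of the two -- i.e., the non-forcing endpoint when only one of the endpoints forces, and either endpoint (or nothing) when neither does. Then, replaying in $G$, whenever an endpoint of $e$ is about to force, the other endpoint has already turned black, so every force survives and $Z(G)\le Z(G-e)+1$.

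For the two inequalities bounding $Z(G-\{v\})$ and $Z(G-e)$ from above, I would pass from $G$ to the smaller graph, starting from a minimum zero forcing set $Z$ of $G$ together with a chronological list of forces; recall this list splits $V(G)$ into exactly $|Z|$ maximal forcing chains, one beginning at each vertex of $Z$. The deleted vertex $v$ lies on one such chain; letting $b$ be its successor there (absent if $v$ ends its chain), put $Z':=(Z\setminus\{v\})\cup\{b\}$, so $|Z'|\le|Z|+1$. Delete from the list the at most one force into $v$ and the at most one force out of $v$, and replay the rest in $G-\{v\}$: every surviving force $a\to c$ (now with $a,c\ne v$) stays legal because deleting $v$ only removes edges, while the part of $v$'s chain beyond $v$ is relaunched from $b\in Z'$; hence all of $V(G)\setminus\{v\}$ turns black and $Z(G-\{v\})\le Z(G)+1$. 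For the edge, if $e=xy$ is used by no force of the chosen list then $Z$ is already a zero forcing set of $G-e$ -- the replay survives verbatim, the only edge whose deletion could kill a force $a\to c$ being $ac$ itself; and if $e$ is used, say as $x\to y$, put $Z':=Z\cup\{y\}$, drop the force $x\to y$, and replay the remainder in $G-e$, yielding $Z(G-e)\le|Z'|=Z(G)+1$.

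The two genuinely delicate spots should be: first, the choice of endpoint in $Z(G)\le Z(G-e)+1$ -- adjoining the ``wrong'' endpoint (for instance, always the same one) can genuinely fail, and seeing why pinpoints the mechanism behind the bound; and second, in $Z(G-e)\le Z(G)+1$, the realization that the ``$+1$'' is forced precisely by the at most one force carried by $e$, and that re-seeding that one forcing chain at its broken point $y$ repairs everything further along the chain. Everything else is the routine replay bookkeeping of the monotonicity principle, together with, in the vertex cases, the standard decomposition of $V(G)$ into $|Z|$ forcing chains.
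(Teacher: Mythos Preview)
The paper does not prove this theorem; it is quoted from \cite{B2} and stated without argument. So there is no ``paper's own proof'' to compare against --- your task here is simply to give a correct proof, and you have.

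Your four arguments are all sound. The monotonicity principle you isolate is exactly the right tool: deleting edges not carrying a force, and enlarging the initial black set, can only help each force in a replayed chronological list. Your application in the two ``easy'' directions (adding $v$ back, deleting an unused edge) is immediate, and in the two directions that require re-seeding a broken chain (at $b$ after deleting $v$, at $y$ after dropping the force $x\to y$) you correctly identify the single vertex that restores the replay. The one place that needs genuine care --- $Z(G)\le Z(G-e)+1$, where adding the edge $e$ back can introduce a white neighbour --- you handle correctly by adjoining the endpoint whose force comes later (or the non-forcing one), so that whenever either endpoint is about to force in $G$, the other is already black. Your own flagging of this as the delicate step is apt: adjoining the earlier-forcing endpoint can indeed fail.

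This is the standard proof of the edge/vertex spread bounds, and is essentially what appears in \cite{B2}.
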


\section{Iteration index of a graph}

To facilitate the precise definition of the iteration index $I(G)$ of a graph $G$, we shall first more precisely
(and concisely) define zero forcing parameters in terms of a discrete dynamical system associated with $G$,
which we'll call the zero forcing system.\\

\begin{Def}
Given any graph $G$, the zero forcing system induced by a vertex set $S
\subseteq V(G)$ is the following recursively defined sequence of
functions $\chi_S^i: V(G) \rightarrow \{0,1\}$ such that
\begin{equation*}
\chi_S^0(v)=\left\{
\begin{array}{ll}
0 & \mbox{ if } v \in S\\
1 & \mbox{ if } v \in V(G)\setminus S;
\end{array} \right.
\end{equation*}
let $\chi_S^i$ be defined for $i \ge 0$, then
\begin{equation*}
\chi_S^{i+1}(v)=\left\{
\begin{array}{ll}
0 & \mbox{ if } \chi_S^i(v)=0\\
0 & \mbox{ if } \chi_S^i(v)=1, \exists u \in N(v) \mbox{ such that } \forall w \in N[u] \mbox{ with } w \neq v, \chi_S^i(w)=0\\
1 & \mbox{ otherwise }.
\end{array} \right.
\end{equation*}
\end{Def}

\begin{Def}
A vertex set $Z \subseteq V(G)$ is a \textit{zero forcing
set} if there exists some $n \ge 0$ such that $\chi_Z^n(v)=0$,
$\forall v \in V(G)$. For $i\ge 1$, we define the $i$-th
\textit{derived set} of $Z$, denoted by $D_Z^i$, as
$D_Z^{i}=\{v \in V(G) : \chi_Z^i(v)=0 \mbox{ and }
\chi_Z^{i-1}(v) =1\}$.
\end{Def}

\begin{Def}
The \textit{zero forcing number} $Z(G)$ is the minimum of $|Z|$ over all zero forcing sets $Z\subseteq V(G)$.
And a zero forcing set of cardinality $Z(G)$ will be called a $Z(G)$-set.
\end{Def}

Next, we define the iteration index of a graph, on which the present paper is focused.\\

\begin{Def}
For any zero forcing set $Z$ of $G$, the \textit{iteration index} $I_Z(G)$
of $Z$ is the minimum $n \ge 0$ such that $\chi_Z^n(v)=0$ for any
$v \in V(G)$. And the iteration index of $G$ is $I(G)= \min \{|I_Z(G)|: Z \mbox{ is a $Z(G)$-set}\}$.
\end{Def}

Note that $Z_i$ for $i\geq 1$ as defined earlier is
${(\chi_Z^i)}^{-1}(0)=Z\cup (\cup_{j=1}^{i}D_Z^{j})$. And thus
$I_Z(G)$ is the length $n$ of the strictly increasing sequence of
sets $Z=Z_0\subset \ldots Z_{n-1}\subset Z_n=V(G)$, where
$Z_{n-1}\neq V(G)$. In prose, the iteration index of a zero
forcing set $Z$ of $G$ is simply the number of global (taking all
black vertices at each step) applications of the color-change rule
required to effect all vertices of $G$ black, starting with $Z$.
The minimum ($I(G)$) among such values for all $Z(G)$-sets is then
an invariant of $G$ which is intrinsically interesting. From the
``real world" modeling (or discrete dynamical system) perspective,
here is a possible scenario: There are initially $|Z|$ persons
carrying a certain condition or trait (anything from a virus to a
genetic mutation) in a population of $|V(G)|$ people, where the
edges $E(G)$ characterize, say, inter-personal relation of a
certain type. If $Z$ is capable of passing the condition to the
entire population (i.e., ``zero forcing"), then $I_Z(G)$ may
represent the number of units of time (anything from days to millennia)
necessary for the entire population to acquire the condition or trait. \\

Now, we consider an algebraic interpretation of $I_Z(G)$; it is
related to Proposition 2.3 of~\cite{AIM} which states that: ``Let
$Z$ be a zero forcing set of $G=(V,E)$ and $A\in
\mathcal{S}(F,G)$. [Here, $A=(A_{ij})$ is a symmetric matrix where the diagonal entries are arbitrary elements of a field $F$ and, for $i \neq j$,
$A_{ij}\neq 0$ exactly when $ij$ is an edge of the graph $G$.] If $\mathbf{x}\in ker(A)$ and
$supp(\mathbf{x})\cap Z=\emptyset$, then $\mathbf{x}=\mathbf{0}$"
Rephrasing slightly, this says that $x_i=0$ (where
$\mathbf{x}=(x_i)$) for each $i\in Z$ and $A\mathbf{x}=\mathbf{0}$
together imply that $x_j=0$ for $j\notin Z$ as well. The length of
the longest forcing chain of $Z$" ($LLFC(Z)$) appears to be the
number of steps needed to reach $\mathbf{x}=\mathbf{0}$ by solving
the linear system $A\mathbf{x}=\mathbf{0}$ through \emph{naive
substitution}, starting with the data $x_i=0$ for each $i\in Z$
--- as Example 1 will show. It's clear that $I_Z(G)$
is an upper bound for the $LLFC(Z)$. It will be shown in
Example 2 that $I_Z(G)$ may be strictly greater than
$LLFC(Z)$. However, $I_Z(G)$ has the advantage of being
canonically defined, in contrast to the notion of the forcing
chain: After fixing a zero forcing set $Z$, an arbitrary choice
must be made when there are two or more forcing vertices at any
given step. Thus, there may be multiple reversals of $Z$; a
reversal of a reversal of $Z$ is not necessarily $Z$ --- to
name two of the side effects of the non-canonical nature of the forcing chain.\\

\textbf{Example 1.} Let $G=C_3 \square K_2$, with vertices
labeled as in Figure~\ref{Z}.
\begin{figure}[htbp]
\begin{center}
\scalebox{0.45}{\input{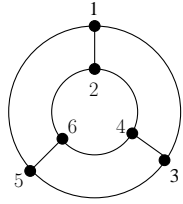}} \caption{$C_3 \square
K_2$}\label{Z}
\end{center}
\end{figure}
Notice $Z(G) \ge \delta(G)=3$. Since $\{2,4,6\}$ is a zero forcing
set, we have $Z(G)=3$. Since $Z(G)<|V(G)|$, we have $I(G)\ge 1$.
With $Z=\{2,4,6\}$, we get $D_Z^1=\{1,3,5\}$, and so $I_Z(G)=I(G)=1$.

\vspace{.1in}

Now, $Z'=\{3,4,6\}$ is also a $Z(G)$-set. Note that
$D_{Z'}^1=\{2\}$ and $D_{Z'}^2=\{1,5\}$. So
$I_{Z'}(G)=2$. Thus, we see that $I_Z(G)$ is not constant as $Z$ varies over $Z(G)$-sets.\\

Let $A$ be the generic symmetric matrix associated with the graph
in Figure~\ref{Z}; let $\mathbf{v}$ and $\mathbf{w}$ be supported
outside of $Z$ and $Z'$, respectively. Thus
\[A=\left(\begin{array}{cccccc}
a_1 & c_1 & c_2 & 0 &  c_3 & 0\\
c_1 & a_2 & 0 & c_4 & 0 & c_5\\
c_2 & 0 & a_3 & c_6 & c_7 & 0\\
0 & c_4 & c_6 & a_4 & 0 & c_8\\
c_3 & 0 & c_7 & 0 & a_5& c_9\\
0 & c_5 & 0 & c_8 & c_9 & a_6
\end{array} \right),
\mathbf{v}=\left(\begin{array}{c}
v_1\\
0\\
v_3\\
0\\
v_5\\
0
\end{array} \right), \ \ and\ \
\mathbf{w}=\left(\begin{array}{c}
w_1\\
w_2\\
0\\
0\\
w_5\\
0
\end{array} \right)
\]
where $a_i$'s are arbitrary and $c_j$'s are each non-zero real
numbers. First suppose $A\mathbf{v}=\mathbf{0}$. Then we have the
following system of linear equations:

\[\left\{ \begin{array}{ll}
a_1 \cdot v_1+c_2 \cdot v_3+c_3 \cdot v_5=0 & (1)\\
c_1 \cdot v_1=0 & (2)\\
c_2 \cdot v_1+ a_3 \cdot v_3+c_7 \cdot v_5=0 & (3)\\
c_6 \cdot v_3=0 & (4)\\
c_3 \cdot v_1+ c_7 \cdot v_3+ a_5 \cdot v_5=0 & (5)\\
c_9 \cdot v_5=0 . & (6)
\end{array}
\right. \]

From the second, fourth, and sixth equations above, we
get $v_1=v_3=v_5=0$. Here, we reach $\mathbf{v}=\mathbf{0}$ in one
step, corresponding to $LLFC(Z)=I_Z(G)=1$.\\

Next suppose $A\mathbf{w}=\mathbf{0}$. Then we have the following
system of linear equations:

\[\left\{ \begin{array}{ll}
a_1 \cdot w_1+c_1 \cdot w_2+c_3 \cdot w_5=0 & (1)\\
c_1 \cdot w_1+a_2 \cdot w_2=0 & (2)\\
c_2 \cdot w_1 + c_7 \cdot w_5=0 & (3)\\
c_4 \cdot w_2=0 & (4)\\
c_3 \cdot w_1+ a_5 \cdot w_5=0 & (5)\\
c_5 \cdot w_2+ c_9 \cdot w_5=0 . & (6)
\end{array}
\right. \]

From the fourth equation above, we get $w_2=0$, which corresponds
to $D_{Z'}^1$. By applying $w_2=0$ to the system of linear
equations above, we get

\[\left\{ \begin{array}{ll}
a_1 \cdot w_1+c_3 \cdot w_5=0 & (1)\\
c_1 \cdot w_1=0 & (2)\\
c_2 \cdot w_1 + c_7 \cdot w_5=0 & (3)\\
c_3 \cdot w_1+ a_5 \cdot w_5=0 & (4)\\
c_9 \cdot w_5=0. & (5)
\end{array}
\right. \]

The second and fifth equations yield $w_1=0=w_5$, which corresponds to $D_{Z'}^2$.
Here, we reach $\mathbf{w}=\mathbf{0}$ in two steps, corresponding to $LLFC(Z')=I_{Z'}(G)=2$.\\

\textbf{Example 2.} As discussed in the introduction, the notion
of ``a forcing chain" has been introduced and made use of
in~\cite{AIM} and elsewhere. However, the length of the longest
forcing chain of a graph can be strictly less than its iteration
index. For example, let $T$ be the tree in Figure \ref{remark}.
Since $P(T)=3$, $Z(T)=3$. One can readily check that there are ten
$Z(G)$-sets for $T$: $\{1,2,4\}$, $\{1,2,5\}$, $\{1,3,4\}$,
$\{1,3,5\}$, $\{2,4,5\}$, $\{2,4,9\}$, $\{2,5,9\}$, $\{3,4,5\}$,
$\{3,4,9\}$, and $\{3,5,9\}$. Further, one can check for $Z$ (any
of the ten sets) that the length of the longest forcing chain is
two, while $I_Z(T)=I(T)=3$. Let $A$ denote the generic symmetric
matrix associated with $T$, and let $\mathbf{v}\in ker(A)$ be a
vector supported outside of $Z$. Solving $A\mathbf{v}=\mathbf{0}$
through naive substitution starting with the data $v_i=0$ for
each $i\in Z$ as we did in the previous example, one sees that
the number of steps needed to reach $\mathbf{v}=0$ is $LLFC(Z)$ rather than $I(Z)$.\\

\begin{figure}[htbp]
\begin{center}
\scalebox{0.45}{\input{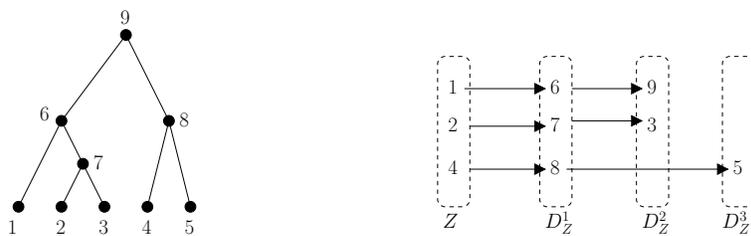}} \caption{An exmple
showing that $LLFC(Z)=2 < I_Z(G)=I(G)=3$}\label{remark}
\end{center}
\end{figure}

\begin{Thm}\label{bounds}
For any graph $G$ that is not edgeless, $$\max\left\{\frac{|V(G)|}{Z(G)}-1, 1\right\}\le
I(G) \le |V(G)|-Z(G).$$
\end{Thm}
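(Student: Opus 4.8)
The plan is to prove the upper bound and the two lower bounds separately; all three follow from the single elementary observation that, for a zero forcing set $Z$, the derived sets $D_Z^1,\dots,D_Z^{I_Z(G)}$ form a partition of $V(G)\setminus Z$ into $I_Z(G)$ nonempty blocks, each of cardinality at most $|Z|$.

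For the upper bound I would fix an arbitrary $Z(G)$-set $Z$ and put $n=I_Z(G)$. Since, as recorded in the text, $n$ is the length of a strictly increasing chain $Z=Z_0\subsetneq Z_1\subsetneq\cdots\subsetneq Z_n=V(G)$, each derived set $D_Z^i=Z_i\setminus Z_{i-1}$ is nonempty, so $|V(G)|=|Z|+\sum_{i=1}^n|D_Z^i|\ge Z(G)+n$; hence $I_Z(G)\le |V(G)|-Z(G)$, and taking the minimum over all $Z(G)$-sets gives $I(G)\le |V(G)|-Z(G)$.

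For the lower bound $I(G)\ge \frac{|V(G)|}{Z(G)}-1$ I would invoke the other half of the same count: each global application of the color-change rule recolors at most $|Z|$ vertices, i.e.\ $|D_Z^i|\le |Z|$ for all $i$ — this is precisely the inequality $|Z_i-Z_{i-1}|\le |Z|$ stated earlier in the paper. Thus for a $Z(G)$-set $Z$ with $n=I_Z(G)$ we get $|V(G)|=|Z|+\sum_{i=1}^n|D_Z^i|\le (n+1)Z(G)$, so $I_Z(G)\ge \frac{|V(G)|}{Z(G)}-1$, and minimizing over $Z(G)$-sets yields the claim. For the remaining bound $I(G)\ge 1$ I would use that $G$ is not edgeless: picking an edge $uv$, the set $V(G)\setminus\{v\}$ is a zero forcing set since the black vertex $u$ has $v$ as its only white neighbor, so $Z(G)\le |V(G)|-1$; therefore every $Z(G)$-set is a proper subset of $V(G)$ and requires at least one global step, i.e.\ $I_Z(G)\ge 1$ for every $Z(G)$-set, so $I(G)\ge 1$. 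Combining the two lower bounds gives $\max\{\frac{|V(G)|}{Z(G)}-1,\,1\}\le I(G)$.

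I do not expect a genuine obstacle here; the only point deserving a line of justification is the per-step bound $|D_Z^i|\le |Z|$. If one wants to argue it from scratch rather than cite it, the key fact is that a single vertex $u$ cannot be the forcing witness for two distinct white vertices $v,v'$ at the same step: if all of $N[u]\setminus\{v\}$ and all of $N[u]\setminus\{v'\}$ are black at step $i$, then $v'$, lying in $N[u]\setminus\{v\}$, is already black, contradicting $v'\in D_Z^{i+1}$ — so each step recolors at most one white vertex per currently black vertex, of which there are at most $|Z|$.
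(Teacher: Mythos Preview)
Your main argument is correct and essentially identical to the paper's: both proofs use that the derived sets partition $V(G)\setminus Z$ into nonempty blocks of size at most $|Z|$, giving $Z(G)+I(G)\le |V(G)|\le (I(G)+1)Z(G)$, together with the observation that a non-edgeless graph has $Z(G)<|V(G)|$.

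There is, however, a slip in your optional ``from scratch'' justification of $|D_Z^i|\le |Z|$. You correctly show that a single black vertex $u$ cannot witness the forcing of two distinct white vertices at the same step, but you then conclude ``each step recolors at most one white vertex per currently black vertex, of which there are at most $|Z|$.'' The set of currently black vertices at step $i$ is $Z_{i-1}$, which typically has cardinality strictly greater than $|Z|$, so this only yields $|D_Z^i|\le |Z_{i-1}|$. To get the sharper bound $|D_Z^i|\le |Z|$ one must trace forcing chains back to $Z$: choose for each $v\in D_Z^i$ a forcing witness $u_v$, observe (as you did) that $v\mapsto u_v$ is injective, and iterate to obtain an injection from $D_Z^i$ into $Z$ along vertex-disjoint forcing chains. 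Since you also cite the inequality directly from the paper's introduction, your proof as written is fine; just be aware that the self-contained sketch needs this extra step.
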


\begin{proof}
Since $G$ has an edge, the cardinality of a minimum zero forcing set $Z_0$ is less than $|V(G)|$; thus, $I(G)\geq 1$. Application of
color-change rule results in a chain of sets $Z_0 \subset Z_1
\subset \cdots \subset Z_k=V(G)$, where $k=I(G)$. Note that
$|Z_0|=Z(G)$. Since $V(G)$ is the disjoint union of $Z_0$ and $Z_i-Z_{i-1}$ for $1 \le i \le k$,
we have $$|V(G)|=|Z_0|+\sum_{i=1}^k|Z_i-Z_{i-1}| \le
|Z_0|+k|Z_0|=(k+1)|Z_0|.$$ So $\frac{|V(G)|}{|Z_0|}-1 \le k
=I(G)$. The inequality $I(G) \le |V(G)|-|Z_0|$ is trivial since
$Z_i-Z_{i-1} \neq \emptyset$ for $1 \le i \le k$. This completes
the proof.
\end{proof}

\begin{figure}[htbp]
\begin{center}
\scalebox{0.45}{\input{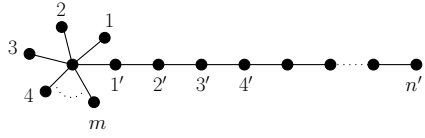}} \caption{A graph $G$
satisfying $I(G)=|V(G)|-Z(G)$}\label{upper bound}
\end{center}
\end{figure}

\textbf{Remark 1.} The bounds in Theorem~\ref{bounds} are best possible. We illustrate the sharpness of the lower bound with two examples. First notice 
$I(K_n)=1$ (see (i) of Observation \ref{observation}). Second, let $G$ be $P_s \square P_t$: for $s \ge t \ge 2$, one can readily check that
$|V(G)|=st$, $Z(G)=t$, and $I(G)=s-1=\frac{|V(G)|}{Z(G)}-1$. For the sharpness of the upper bound, $K_n$ again serves as an example. As a less trivial 
example, let $G$ be the graph obtained by joining the center of a star $K_{1,m}$ to an end-vertex of the
path $P_n$ (see Figure \ref{upper bound}): one can readily check
that $|V(G)|=m+n+1$, $Z(G)=m$, and $I(G)=n+1=|V(G)|-Z(G)$.\\

In \cite{AIM}, the zero forcing number for a cycle, a path, a
complete graph, and a complete bipartite graph (respectively) was obtained; i.e.,
(i) $Z(P_n)=1$ for $n \ge 2$; (ii) $Z(C_n)=2$ for $n \ge 3$, (iii)
$Z(K_n)=n-1$ for $n \ge 2$, and (iv) $Z(K_{p,q})=p+q-2$, where $p,
q \ge 2$.\\

\begin{Obs} \hfill \label{observation}
\begin{itemize}
\item[(i)] $I(K_n)=1$ for $n \ge 2$, since $Z(K_n)=n-1$.
\item[(ii)] $I(P_n)=n-1$ for $n \ge 2$, since $Z(P_n)=1$ and only an end-vertex is a minimum zero forcing set.
\item[(iii)] $I(C_n)=\lceil\frac{n-2}{2}\rceil$ for $n \ge 3$, since $Z(C_n)=2$ and only an adjacent pair of vertices is a minimum zero forcing set.
\item[(iv)] $I(K_{1,q})=2$ for $q \ge 2$, since $Z(K_{1,q})=q-1$ and any minimum zero forcing set must omit the central vertex along with an
    end-vertex.
\item[(v)] $I(K_{p,q})=1$ for $p, q, \ge 2$, since $Z(K_{p,q})=p+q-2$ and one may choose the minimum zero forcing set that
omits a vertex from each partite set.
\end{itemize}
\end{Obs}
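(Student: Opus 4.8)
The plan is to handle the five items with a common template: recall from \cite{AIM} (as listed just above the statement) the value of $Z(G)$ for each family, then pin down the minimum zero forcing sets as far as necessary, and finally run the color-change rule on a representative $Z(G)$-set to read off its iteration index. The universal lower bound $I(G)\ge 1$ is free from Theorem~\ref{bounds}, since none of $K_n$, $P_n$, $C_n$, $K_{1,q}$, $K_{p,q}$ is edgeless. Consequently, for (i) and (v) it suffices to exhibit a single $Z(G)$-set that blackens all of $G$ in one global step; no uniqueness analysis is needed there.

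For (i), any $(n-1)$-subset of $V(K_n)$ leaves exactly one white vertex $v$, and every black vertex then has $v$ as its unique white neighbor, so one application of the rule finishes; hence $I(K_n)=1$. For (v), choose the $Z(K_{p,q})$-set omitting one vertex $a$ from the part of size $p$ and one vertex $b$ from the part of size $q$; because $p,q\ge 2$, each part still contains a black vertex, and that vertex sees $b$ (respectively $a$) as its only white neighbor, so again a single step suffices and $I(K_{p,q})=1$.

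Items (ii), (iii), (iv) require the extra step of characterizing the minimum zero forcing sets, which is where the genuine (though elementary) work lies. For $P_n$: a single interior vertex has two white neighbors and can never force, so the only $Z(P_n)$-sets are the two end-vertices; starting from an end-vertex, the rule forces exactly one new vertex per step along the path, giving $n-1$ steps. For $C_n$: a two-element set of non-adjacent vertices leaves every black vertex flanked by two white neighbors and is thus inert, so the $Z(C_n)$-sets are precisely the adjacent pairs; from such a pair the black arc grows by one vertex in each of the two directions at every step (absorbing the last one or two vertices at the end), so the $n-2$ remaining vertices are all black after $\lceil (n-2)/2\rceil$ steps. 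For $K_{1,q}$: if a $(q-1)$-set contains the center $c$ it must omit two end-vertices, and then neither $c$ (two white neighbors) nor any black end-vertex (whose only neighbor $c$ is black) can force; hence every $Z(K_{1,q})$-set omits $c$ together with exactly one end-vertex $u$. From such a set, one step forces $c$ via any black end-vertex, and a second step forces $u$ via $c$; a single step cannot finish, since $u$'s only neighbor $c$ is white initially and so no black vertex can force $u$ in the first step. Thus $I(K_{1,q})=2$.

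The only place needing care—the ``main obstacle''—is the uniqueness-of-minimum-zero-forcing-set claims underlying (ii)--(iv): one must rule out every alternative configuration of the prescribed cardinality, and the clean device for this is the neighbor-count observation above, namely that a black vertex with two or more white neighbors is inert. Once the admissible initial sets are identified, the iteration count itself is an immediate induction on the number of applications of the color-change rule, and the only thing to watch is the bookkeeping in (iv) (correctly enumerating the two white vertices of a candidate $(q-1)$-set) and the final-step rounding in (iii).
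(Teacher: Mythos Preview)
Your proposal is correct and follows essentially the same approach as the paper: the paper gives no separate proof for this observation, embedding the entire argument in the ``since'' clauses, and your proposal simply fleshes out those clauses (citing the known $Z(G)$ values, characterizing the $Z(G)$-sets via the ``two white neighbors means inert'' observation, and counting steps). There is no substantive difference.
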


\section{Zero forcing number and Iteration index of the Cartesian
product of some graphs}

Consider $G \square H$ with $|G|=s$ and $|H|=t$. Let the $t$
copies of $G$ to be $G^{(1)}, G^{(2)}, \ldots, G^{(t)}$ from the
left to the right and let the $s$ copies of $H$ to be $H^{(1)},
H^{(2)}, \ldots, H^{(s)}$ from the top to the bottom. The vertex labeled $(x,y)$ in $G \square H$ is the result of the
intersection of $G^{(y)}$ and $H^{(x)}$. See Figure \ref{grid} for
$G=P_s$ and $H=P_t$. The Cartesian product $P_s \square P_t$ is
also called a \textit{grid graph}.\\

\begin{figure}[htbp]
\begin{center}
\scalebox{0.45}{\input{grid1.pstex_t}} \caption{The grid graph
$P_s \square P_t$}\label{grid}
\end{center}
\end{figure}

In \cite{AIM}, it is shown that $Z(P_s \square P_t)= \min \{s,t\}$
for $s,t \ge 2$, $Z(C_s \square P_t)= \min\{s,2t\}$ for $s \ge 3,
t \ge 2$, $Z(K_s \square P_t)=s$ for $s,t \ge 2$, $Z(K_s \square
K_t)=st-s-t+2$ for $s,t \ge 2$, and $Z(C_s \square K_t)=2t$ for $s
\ge 4$.\\

\begin{Thm} \label{T1} \hfill
\begin{itemize}
\item[(i)] For $t \ge s \ge 2$, $I(P_s \square P_t)=t-1$.
\item[(ii)] For $s,t \ge 2$, $I(K_s \square P_t)=t-1$.
\item[(iii)] For $s \ge 3$ and $t \ge 2$,
$I(C_s \square P_t)=\left\{
\begin{array}{ll}
\lceil\frac{s-2}{2}\rceil & \mbox{ if } s \ge 2t\\
t-1 & \mbox{ if } s<2t .
\end{array} \right.$
\item[(iv)] For $s \ge 4$ and $t \ge 2$, $I(C_s \square K_t)=\lceil\frac{s-2}{2}\rceil$.
\end{itemize}
\end{Thm}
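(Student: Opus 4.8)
The plan is to prove all four parts by the same two-step scheme: the lower bounds come for free from Theorem~\ref{bounds} together with the zero forcing numbers recalled just above, and each upper bound is witnessed by one explicit $Z(G)$-set whose derived sets we compute directly. Throughout, $|V(G)|=st$. For (i) and (ii) we have $Z(G)=s$, so Theorem~\ref{bounds} gives $I(G)\ge\frac{st}{s}-1=t-1$; for (iii) with $s<2t$ we again have $Z(G)=s$, giving $I(G)\ge t-1$; and for (iii) with $s\ge 2t$ as well as for (iv) we have $Z(G)=2t$, so $I(G)\ge\frac{st}{2t}-1=\frac{s-2}{2}$, and since $I(G)$ is an integer this improves to $I(G)\ge\lceil\frac{s-2}{2}\rceil$. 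In every case the lower bound already equals the asserted value, so only the upper bounds remain.

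For (i), (ii), and (iii) with $s<2t$, take $Z$ to be a single copy of the first factor sitting inside the product: a copy of $P_s$ in $P_s\square P_t$, of $K_s$ in $K_s\square P_t$, of $C_s$ in $C_s\square P_t$. In each of these cases $|Z|=s=Z(G)$, so $Z$ is a $Z(G)$-set. Write the product as the union of $t$ such ``slices'' $S_1,\ldots,S_t$ with $Z=S_1$, noting that in all three cases consecutive slices $S_j$ and $S_{j+1}$ are joined by a perfect matching coming from the edges of the second factor $P_t$. A direct check of the color-change rule now shows that whenever $S_1\cup\cdots\cup S_j$ is black, every vertex of $S_j$ has exactly one white neighbor---its partner in $S_{j+1}$---while every black vertex in $S_1\cup\cdots\cup S_{j-1}$ has all of its neighbors black; hence $D_Z^j=S_{j+1}$ for $1\le j\le t-1$, the chain $Z=Z_0\subset Z_1\subset\cdots\subset Z_{t-1}=V(G)$ has length $t-1$, and $I_Z(G)=t-1$. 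Thus $I(G)\le t-1$, which matches the lower bound.

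For (iii) with $s\ge 2t$ and for (iv) with $s\ge 4$ we have $Z(G)=2t$; take $Z$ to be the union of two adjacent copies of the second factor---two consecutive copies of $P_t$, respectively of $K_t$---so that $|Z|=2t=Z(G)$. Regard the $s$ copies of the second factor as the vertices of the cycle $C_s$, with consecutive copies joined by perfect matchings coming from the edges of $C_s$. The color-change computation then shows that whenever the black copies form a contiguous arc of $C_s$, forcing occurs only from the two copies at the ends of the arc, each forcing all of its single neighboring white copy in one global step, while every black copy in the interior of the arc has all of its neighbors black and forces nothing. Hence each global application of the rule enlarges the black arc by exactly two copies, one at each end---precisely the spreading behind the value $I(C_s)=\lceil\frac{s-2}{2}\rceil$ in Observation~\ref{observation}(iii)---so, starting from two copies, it takes exactly $\lceil\frac{s-2}{2}\rceil$ steps to cover all $s$ copies, the last step closing a gap of one or two copies according to the parity of $s$. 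Therefore $I_Z(G)=\lceil\frac{s-2}{2}\rceil$, and with the lower bound above we get $I(G)=\lceil\frac{s-2}{2}\rceil$.

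The heart of the argument is the elementary forcing computation inside a Cartesian product: that a fully black slice has a unique white neighbor at each of its vertices, namely its matched vertex in the next slice, and that no vertex can force ``sideways'' within a slice or skip ahead to a later one. Granting that, the path-direction cases (i), (ii), and (iii) with $s<2t$ are immediate. The only genuinely delicate point, and the one to write out with care, is the bookkeeping in the cyclic cases: verifying that the arc of black copies really does grow by two per step and never stalls---so that $Z$ is indeed a zero forcing set---and, together with the matching lower bound, does not close up ahead of schedule, which is exactly where the parity of $s$ enters and produces the ceiling.
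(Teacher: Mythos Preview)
Your proof is correct and follows essentially the same approach as the paper: lower bounds from Theorem~\ref{bounds} together with the known zero forcing numbers, and upper bounds via the same explicit $Z(G)$-sets (a single ``slice'' of the first factor for the path-direction cases, two adjacent slices of the second factor for the cyclic cases), with the forcing propagating slice by slice. Your presentation is slightly more conceptual in describing the slices and perfect matchings, whereas the paper writes out the individual forces $(i,j)\to(i,j+1)$ and $(i,j)\to(i\pm 1,j)$ explicitly, but the argument is the same.
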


\begin{proof}
$(i)$ Since $Z(P_s \square P_t)=s$, $I(P_s \square P_t) \ge t-1$ by
Theorem \ref{bounds}. If we take $Z_0=\{(1,1), (2,1), \ldots, (s,1)\}$ as a $Z(P_s \square P_t)$-set (see Figure~\ref{grid}), then, for each $1 \le i \le 
s$, $(i,j) \rightarrow (i, j+1)$ if $1 \le j \le t-1$.
Hence $Z_{t-1}=V(P_s \square
P_t)$, so $I(P_s \square P_t) \le t-1$.\\

$(ii)$ Since $Z(K_s \square P_t)=s$, $I(K_s \square P_t) \ge t-1$ by
Theorem \ref{bounds}.
If we take $Z_0=\{(1,1), (2,1), \ldots, (s,
1)\}$ as a $Z(K_s \square P_t)$-set (see Figure \ref{KP} for $K_5 \square P_3$), then,
for each $1 \le i \le s$, $(i,j) \rightarrow (i, j+1)$ if $1 \le j \le t-1$.
Hence $Z_{t-1}=V(K_s \square P_t)$, so
$I(K_s \square P_t) \le t-1$.\\

$(iii)$ We consider two cases.

\vspace{.1in}

\textit{Case 1. $s \ge 2t$:} Then $Z(C_s \square P_t)=2t$, and $I(C_s
\square P_t) \ge \lceil\frac{s-2}{2}\rceil$ by Theorem
\ref{bounds}. By taking $Z_0=\{(1,1), (1,2), \ldots, (1,t)\} \cup
\{(s,1), (s,2), \ldots, (s,t)\}$ as a $Z(C_s \square P_t)$-set, we have, for each $1 \le j \le t$, that
$(i, j) \rightarrow (i+1, j)$ for $1 \le i \le \lceil\frac{s}{2}\rceil-1$ and $(i,j) \rightarrow (i-1,j)$ for $\lfloor\frac{s}{2}\rfloor +2 \le i \le s$.
Hence $Z_{\lceil\frac{s-2}{2}\rceil}=V(C_s \square P_t)$, so $I(C_s
\square P_t) \le \lceil\frac{s-2}{2}\rceil$.
Thus $I(C_s \square P_t)= \lceil\frac{s-2}{2}\rceil$.

\vspace{.1in}

\textit{Case 2. $s < 2t$:} Then $Z(C_s \square P_t)=s$, and $I(C_s
\square P_t) \ge t-1$ by Theorem \ref{bounds}. If we take
$Z_0=\{(1,1), (2,1), \ldots, (s,1)\}$ as a $Z(C_s \square
P_t)$-set, then, for each $1 \le i \le s$, $(i,j) \rightarrow (i, j+1)$ if $1 \le j \le t-1$.
Hence $Z_{t-1}=V(C_s \square
P_t)$, so $I(C_s \square P_t) \le t-1$. Thus $I(C_s \square
P_t)=t-1$.\\

$(iv)$ Since $Z(C_s \square K_t)=2t$, $I(C_s
\square K_t) \ge \lceil\frac{s-2}{2}\rceil$ by Theorem
\ref{bounds}. By taking $Z_0=\{(1,1), (1,2), \ldots, (1,t)\}$
$\cup \{(s,1), (s,2), \ldots, (s,t)\}$ as a $Z(C_s \square K_t)$-set, we have, for each $1 \le j \le t$, that
$(i, j) \rightarrow (i+1, j)$ for $1 \le i \le \lceil\frac{s}{2}\rceil-1$ and $(i,j) \rightarrow (i-1,j)$ for $\lfloor\frac{s}{2}\rfloor +2 \le i \le s$.
Hence $Z_{\lceil\frac{s-2}{2}\rceil}=V(C_s \square K_t)$, so $I(C_s
\square K_t) \le \lceil\frac{s-2}{2}\rceil$.
\end{proof}

\begin{figure}[htbp]
\begin{center}
\scalebox{0.45}{\input{KP.pstex_t}} \caption{The Cartesian product
$K_5 \square P_3$}\label{KP}
\end{center}
\end{figure}

Next we consider $K_s \square K_t$ (see Figure \ref{KK} for $K_5
\square K_4$). By (i) and (ii) of Theorem \ref{T1}, we have $I(K_2 \square K_2)=I(P_2 \square P_2)=1$ and $I(K_3 \square K_2)=I(K_3 \square P_2)=1$.\\

\begin{figure}[htbp]
\begin{center}
\scalebox{0.45}{\input{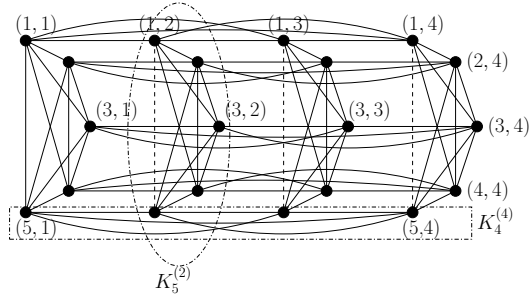}} \caption{The Cartesian product
$K_5 \square K_4$}\label{KK}
\end{center}
\end{figure}

\begin{Thm}\label{theorem on KK}
For $s,t \ge 3$, $I(K_s \square K_t)=2$.
\end{Thm}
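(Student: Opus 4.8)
The plan is to prove $I(K_s \square K_t) = 2$ for $s, t \ge 3$ by establishing the lower bound $I \ge 2$ and the upper bound $I \le 2$ separately, after first recalling that $Z(K_s \square K_t) = st - s - t + 2$.

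\textbf{Lower bound.} First I would observe that $I(K_s \square K_t) \ge 1$ is immediate from Theorem~\ref{bounds}, since the graph has an edge. To rule out $I = 1$, suppose for contradiction that some $Z(K_s \square K_t)$-set $Z_0$ forces all of $V(K_s \square K_t)$ in a single step. Since $|V| = st$ and $|Z_0| = st - s - t + 2$, the complement $V \setminus Z_0$ has exactly $s + t - 2$ vertices, all of which must be forced simultaneously at step $1$. The key structural fact I would use is that two white vertices $(x,y)$ and $(x',y')$ lying in a common copy of $K_s$ or a common copy of $K_t$ (i.e., sharing a coordinate) cannot both be forced in the same step: if $(x,y) = (x',y')$ fails, say $x = x'$, then any black vertex adjacent to $(x,y)$ in the $K_t$-direction is also adjacent to $(x', y')$, so it has at least two white neighbors and cannot force; a black vertex forcing $(x,y)$ from the $K_s$-direction must lie in $H^{(x)}$ — but the analysis has to be done carefully to see the obstruction. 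Concretely, for a vertex $(x,y)$ to be forced at step $1$, there must be a black vertex $u$ all of whose neighbors except $(x,y)$ are black; its neighbors are $H^{(x)} \setminus \{u\}$-type and $G^{(y)}$-type vertices (depending on where $u$ sits), and at most one of those can be the single white vertex $(x,y)$. This forces the white set $V \setminus Z_0$ to be a partial "independent-like" configuration. The cleanest route: show that if all $s + t - 2$ white vertices are forced at step $1$, then no two of them can share a row or a column, which would mean the white set is a partial permutation matrix of size $\le \min\{s,t\}$; but $s + t - 2 > \min\{s,t\}$ whenever $s, t \ge 3$ (indeed $s + t - 2 \ge \max\{s,t\} + 1 > \min\{s,t\}$ unless $\min = \max \le 2$), giving the contradiction. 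So $I(K_s \square K_t) \ge 2$.

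\textbf{Upper bound.} For $I \le 2$ I would exhibit an explicit minimum zero forcing set that forces everything in two steps. A natural candidate: take $Z_0 = V(K_s \square K_t) \setminus \big( \{(1,j) : 2 \le j \le t\} \cup \{(i,1) : 2 \le i \le s\} \big)$, i.e., leave white exactly the vertices in row $1$ (columns $2, \dots, t$) and column $1$ (rows $2, \dots, s$), which is $s + t - 2$ white vertices, so $|Z_0| = st - (s+t-2) = Z(K_s \square K_t)$. At step $1$: each white vertex $(1,j)$ with $j \ge 2$ can be forced — the black vertex $(i,j)$ for any $i \ge 2$ lies in $H^{(i)}$ (all black except possibly $(i,1)$) and in $G^{(j)}$ (all black since column $j \ge 2$ meets the white set only in row $1$); its only white neighbor is $(1,j)$, so $(i,j) \to (1,j)$. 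Wait — I must check $(i,j)$ has no other white neighbor: its neighbors are $H^{(i)} \setminus \{(i,j)\}$ and $G^{(j)} \setminus \{(i,j)\}$; the white vertices in $H^{(i)}$ are $(i,1)$ if $i \ge 2$, which IS white, so this does not work directly. I would instead force column-$1$ white vertices first or use a diagonal white set. The correct choice is likely the white set $\{(1,j): 2 \le j \le t\} \cup \{(i,i): 2 \le i \le \min\{s,t\}\} \cup \dots$ — more robustly, I would leave white a set that is a near-permutation pattern so that each white vertex is the unique white neighbor of some black vertex, then after step $1$ a new batch becomes forceable. The honest statement of the main obstacle is exactly this: \emph{choosing the minimum zero forcing set whose white complement disperses into a configuration that clears in precisely two global rounds}. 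I expect the working construction to leave white one full row minus a corner plus one full column minus that corner's partners arranged so the "corner" vertex $(1,1)$-analogue is forced only at step $2$, after its row-mates and column-mates have turned black at step $1$.

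\textbf{Assembling.} Combining $I(K_s \square K_t) \ge 2$ from the counting/permutation-matrix argument with $I(K_s \square K_t) \le 2$ from the explicit two-round forcing set yields $I(K_s \square K_t) = 2$ for all $s, t \ge 3$, completing the proof. The main obstacle, as flagged, is pinning down the explicit $Z(G)$-set for the upper bound and verifying the two-step forcing carefully — the neighbor bookkeeping in $K_s \square K_t$ is where errors creep in, since every vertex has the fairly large neighborhood $(K_s \square K_t)$ of size $(s-1) + (t-1)$, and "unique white neighbor" must be checked against all of it at each of the two steps.
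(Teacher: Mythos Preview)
Your proposal has genuine gaps in both halves.

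\textbf{Lower bound.} Your ``cleanest route'' claim --- that if all $s+t-2$ white vertices are forced in one step then no two of them share a row or a column --- is not established, and the direct argument you sketch does not go through. If $(x,y)$ and $(x,y')$ are both white (same row), a forcer for $(x,y)$ may perfectly well lie in \emph{column} $y$: take any black $(x'',y)$ whose entire row $x''$ is black and whose column $y$ has only $(x,y)$ white. Such a vertex need not see $(x,y')$ at all. So two same-row whites can each be forced at step~1, and your partial-permutation bound $|V\setminus Z_0|\le\min\{s,t\}$ does not follow from the local two-vertex analysis you describe. The paper argues differently: each forcing of a white vertex $w_q$ certifies an entirely black row together with an entirely black column (namely the row and column of the forcer, with $w_q$ excepted). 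The first such pair accounts for $s+t-2$ black vertices; each subsequent $w_{q+1}$ forces at least one \emph{new} all-black row or column (since one of $w_{q+1}$'s own row/column cannot have been previously declared all-black), contributing at least $s-1$ new black vertices. Summing gives $(s+t-2)+(x-1)(s-1)\le st-s-t+2$ with $x=s+t-2$, which fails for $s,t\ge 3$. Your idea can be repaired along related lines (classifying each white as ``unique-in-its-row'' or ``unique-in-its-column'' and counting), but you have not carried this out.

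\textbf{Upper bound.} You correctly diagnose that your first choice of white set --- the ``cross'' $\{(1,j):2\le j\le t\}\cup\{(i,1):2\le i\le s\}$ --- fails; in fact it is not even a zero forcing set, since every black $(i,j)$ with $i,j\ge 2$ has the two white neighbors $(i,1)$ and $(1,j)$, and $(1,1)$ has all its neighbors white. You then leave the construction unresolved. The fix is to use an ``L'' rather than a cross: take the white set to be $\{(1,j):2\le j\le t\}\cup\{(i,t):2\le i\le s\}$, i.e.\ $Z_0=\{(1,1)\}\cup\bigcup_{j=1}^{t-1}\{(2,j),\ldots,(s,j)\}$. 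Then at step~1 each $(i,1)$ with $i\ge 2$ has entire row and column black except for $(i,t)$, so $(i,1)\to(i,t)$; at step~2 each $(s,j)$ with $j\ge 2$ now has only $(1,j)$ white and forces it. This is exactly the paper's construction, and it gives $I\le 2$.
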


\begin{proof}
Since $Z(K_s \square K_t)=st-s-t+2< |V(K_s \square K_t)|$, $I(K_s
\square K_t) \ge 1$. Assume, for the sake of contradiction, that
$I(K_s \square K_t)=1$. Take any $Z(K_s \square K_t)$-set $Z$ with
$I_Z(K_s \square K_t)=1$ and let $w_1, w_2, \ldots, w_x$ be an ordered listing of
all the vertices not in $Z$; i.e, the ``white vertices". Without loss of generality (WLOG), let
$t\geq s\geq 3$. Let $w_1$ be located in the $i$-th row \textbf{and} $j$-th
column (see Figure~\ref{KK}). Since $I_Z(K_s \square K_t)=1$, there must exist a
``black" vertex $b\in Z$ located in the $i$-th row \textbf{or} $j$-th column
such that each $v\in N[b] \setminus \{w_1\}$ is in $Z$, and $b$ forces $w_1$. Thus, $w_1$ implies the existence
of $s+t-2$ black vertices. Likewise, each $w_i$ implies the existence of (not counting
overlaps) $s+t-2$ black vertices -- namely a ``black row" and a
``black column", disregarding $w_i$ itself. Having considered all
the black rows and black columns (disregarding the $w_i$'s)
corresponding to vertices $w_1$ through $w_q$ for $1\leq q <x$, consider
$w_{q+1}$. Notice that either the black row or the black column
(again, disregarding $w_{q+1}$ itself) corresponding to $w_{q+1}$ must
be ``new", since either the corresponding row or corresponding column contains
$w_{q+1}$: this means that $w_{q+1}$ implies the existence of at least $s-1$ new
black vertices. We thus have the inequality $s+t-2+(x-1)(s-1)\leq
st-s-t+2$, which easily implies that $x<t$, contradicting the fact
$x=t+s-2$ and the hypothesis $t\geq s\geq 3$. Hence $I(K_s
\square K_t) \geq 2$. \\

On the other hand, if we take $Z_0= (\cup_{j=1}^{t-1}\{(2,j),(3,j), \ldots, (s,j)\}) \cup \{(1,1)\}$ as a $Z(K_s \square
K_t)$-set with $|Z_0|=st-s-t+2$, then, for each $2 \le i \le s$, $(i,1) \rightarrow (i,t)$; so $Z_1=Z_0 \cup \{(2,t), (3,t)
, \ldots, (s,t)\}=V(K_s \square K_t) \setminus \{(1,2), (1,3), \ldots, (1,t)\}$. Next, for each $2 \le j \le t$, $(s,j) \rightarrow (1,j)$, and thus $Z_2=V(K_s \square K_t)$. Therefore, $I(K_s \square K_t) \le 2$.
\end{proof}

\textbf{Remark 2.} Noticing $Z(C_3 \square K_t)=Z(K_3
\square K_t)=2t-1$, we have $I(C_3 \square K_2)=I(C_3 \square P_2)=1$ by (iii) of Theorem \ref{T1}, and $I(C_3 \square
K_t)=2$ for $t \ge 3$ by Theorem \ref{theorem on KK}.\\

We recall that 

\begin{Prop} (\cite{AIM}, Prop. 2.5) \label{prop 2.5 of [1]} 
For any graphs $G$ and $H$, $Z(G \square H) \le \min\{Z(G)|H|, Z(H)|G)|\}$.
\end{Prop}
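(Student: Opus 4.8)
The plan is to prove $Z(G \square H) \le Z(G)\cdot|H|$ by exhibiting an explicit zero forcing set of that size; the other bound $Z(G \square H) \le Z(H)\cdot|G|$ follows by the symmetry $G \square H \cong H \square G$. So fix a minimum zero forcing set $Z_0 \subseteq V(G)$ of $G$, with $|Z_0| = Z(G)$, and let $H^{(1)}, \ldots, H^{(s)}$ be the copies of $H$ indexed by the vertices of $G$ (so $|G| = s$), as set up in the paragraph preceding Figure~\ref{grid}. The natural candidate is
\[
S = \bigcup_{x \in Z_0} V(H^{(x)}) = \{(x,y) : x \in Z_0,\ y \in V(H)\},
\]
which has cardinality $|Z_0|\cdot|H| = Z(G)\cdot|H|$. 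I would then argue that $S$ is a zero forcing set of $G \square H$.

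First I would record the key structural observation: for a fixed $y \in V(H)$, the subgraph of $G \square H$ induced on the ``slice'' $G^{(y)} = \{(x,y) : x \in V(G)\}$ is isomorphic to $G$, and moreover within that slice the color-change rule for $G \square H$ restricted to vertices of $G^{(y)}$ behaves \emph{at least as permissively} as the rule for $G$ itself. More precisely, if at some stage of the forcing process on $G \square H$ every vertex of $G^{(y)}$ that lies over a current ``black'' vertex of $G$ is black, and if in $G$ a black vertex $u$ can force a white vertex $v$ (i.e.\ $v$ is the only white $G$-neighbor of $u$), then in $G \square H$ the vertex $(u,y)$ has all its neighbors black except possibly $(v,y)$ — its neighbors are $(u',y)$ for $u' \in N_G(u)$ and $(u,y')$ for $y' \in N_H(y)$, and if the whole copies $H^{(u)}$ and $H^{(u')}$ for all $G$-neighbors $u'$ of $u$ except $u=v$'s column are already black, then $(u,y) \to (v,y)$. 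This suggests running the forcing slice-agnostically: mimic, simultaneously in all $t = |H|$ slices $G^{(y)}$, the chronological list of forces that witnesses $Z_0$ being a zero forcing set of $G$.

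The key steps, in order, are: (1) set up $S$ and the copies/slices notation; (2) prove by induction on the forcing steps of $G$ that if $Z_0 = W_0 \subset W_1 \subset \cdots \subset W_k = V(G)$ is the chain of black sets produced by a fixed chronological list of forces for $G$, then after suitably many global color-change steps in $G \square H$ the black set of $G \square H$ contains $\bigcup_{x \in W_i} V(H^{(x)})$ — the base case is $W_0 = Z_0$, i.e.\ exactly $S$, and the inductive step uses the neighbor computation above, noting that when $u \to v$ in $G$ at stage $i$, all columns $H^{(u')}$ for $u' \in N_G(u)$ are already black by induction (since $N_G(u) \setminus \{v\} \subseteq W_i$ and $v$ will be the unique white $G$-neighbor), so $(u,y) \to (v,y)$ for every $y$; (3) conclude that when $W_k = V(G)$, the black set of $G \square H$ is all of $V(G \square H)$, so $S$ is a zero forcing set and $Z(G \square H) \le |S| = Z(G)|H|$; (4) invoke $G \square H \cong H \square G$ to get the second term of the minimum.

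The main obstacle — really the only place needing care — is step (2), specifically verifying that the forces can be carried out \emph{one $G$-step at a time across all slices}. The subtlety is the ``only white neighbor'' condition in $G \square H$: a vertex $(u,y)$ has both ``horizontal'' neighbors (other columns, same slice structure) and ``vertical'' neighbors (same column, neighboring slices). When we force $(u,y) \to (v,y)$, the vertical neighbors $(u,y')$ for $y' \in N_H(y)$ all lie in the column $H^{(u)}$, which is black from the very start if $u \in Z_0$ and becomes black (entirely, in one of our bundled steps) as soon as $u$ enters some $W_i$ — so one must be slightly careful to do all the forces that bring column $H^{(u)}$ fully black before (or in the same bundled step as) using $(u,y)$ as a forcing vertex, but this is automatic because our induction maintains the invariant ``$x \in W_i \implies H^{(x)}$ is entirely black.'' Once that invariant is stated cleanly, the horizontal condition is exactly the condition ``$v$ is the only white $G$-neighbor of $u$,'' which holds by hypothesis in $G$. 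I expect this to go through without any real computation; the proof is essentially a bookkeeping argument around that invariant.
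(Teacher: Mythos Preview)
The paper does not actually give a proof of this proposition: it is only \emph{recalled} from \cite{AIM} (Prop.~2.5 there), with no argument supplied in the present paper. So there is no ``paper's own proof'' to compare against.

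That said, your argument is correct and is exactly the standard one. Your invariant ``after mimicking the $i$-th $G$-force, the black set of $G\square H$ contains $W_i\times V(H)$'' is the right bookkeeping device, and your neighbor analysis is accurate: when $u\to v$ in $G$, every neighbor of $(u,y)$ other than $(v,y)$ lies in $(N_G(u)\setminus\{v\})\times\{y\}$ or in $\{u\}\times N_H(y)$, both of which are contained in $W_i\times V(H)$. One cosmetic point: you need not worry about bundling all the forces $(u,y)\to(v,y)$ for $y\in V(H)$ into a single global step --- for the purpose of bounding $Z(G\square H)$ you only need $S$ to be \emph{some} zero forcing set, not one with small iteration index, so performing those $|H|$ forces sequentially is perfectly fine and keeps the invariant just as well.
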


\begin{Prop}
Let $t > s \ge 3$. Then\begin{equation*} \left\{
\begin{array}{lll}
Z(C_s \square C_s) \le 2s-1 & \mbox{ if $s$ is odd } & (1)\\
Z(C_s \square C_s) \le 2s & \mbox{ if $s$ is even } & (2)\\
Z(C_s \square C_t) \le 2s & & (3) .
\end{array} \right.
\end{equation*}
\end{Prop}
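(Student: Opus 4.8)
The plan is to exhibit explicit zero forcing sets of the stated cardinalities for $C_s \square C_s$ (in the odd and even cases separately) and for $C_s \square C_t$ with $t > s \ge 3$. Since $Z(G \square H)$ is a minimum, any such explicit set gives the desired upper bound, so no lower-bound argument is needed; in particular there is nothing to optimize, only a single good configuration to find in each case.

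For inequality $(3)$, $Z(C_s \square C_t) \le 2s$: view $C_s \square C_t$ as $t$ copies $G^{(1)}, \dots, G^{(t)}$ of $C_s$ arranged around a cycle, and take $Z_0 = V(G^{(1)}) \cup V(G^{(2)})$, i.e. two full consecutive copies of $C_s$, which has $2s$ vertices. I would then check that the color-change rule propagates this ``block of two columns'' forward: once columns $j-1$ and $j$ are entirely black, each vertex $(i,j)$ has exactly one white neighbor, namely $(i,j+1)$ (its two neighbors within column $j$ are black, its neighbor $(i,j-1)$ is black), so $(i,j) \to (i,j+1)$ and column $j+1$ becomes black. Iterating sweeps through all $t$ columns and fills $V(C_s \square C_t)$, so $Z_0$ is a zero forcing set and $(3)$ follows. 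This same argument is really a special case of Proposition~\ref{prop 2.5 of [1]} with $Z(C_s)=2$, since $2 \cdot |C_t| $ is not the relevant bound here --- rather $Z(C_t)|C_s|$ would give $2s$ only when reading columns as copies of $C_s$; in any case a direct verification is cleanest, and it simultaneously proves $(2)$, because $C_s \square C_s$ is the special case $t = s$ (even or odd), giving $Z(C_s \square C_s) \le 2s$ always. So $(2)$ requires no separate work.

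The only case needing a genuinely different idea is $(1)$: when $s$ is odd we must shave the bound down to $2s-1$. Here I would start from the two-column set $V(G^{(1)}) \cup V(G^{(2)})$ but delete one vertex, say $(1,2)$, leaving $2s-1$ black vertices, and use the oddness of $s$ to recover the missing vertex ``for free'' before the forward sweep stalls. Concretely: column $1$ is entirely black, so within column $1$ the two neighbors of $(1,1)$ are black and its only possibly-white neighbor is $(1,2)$ --- but we must be careful, $(1,1)$'s neighbor $(1,t)$ is white, so $(1,1)$ cannot force yet. Instead, exploit column $2$: the vertex $(2,2)$ has neighbors $(1,2)$ (white), $(3,2)$ (black), and $(2,1),(2,3)$; $(2,1)$ is black but $(2,3)$ is white. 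The right approach is to pick the deleted vertex and the starting pair of columns so that $(1,2)$ gets forced along column~$2$ from both ends: if column $2$ is black except at one vertex, and $s$ is odd, two forcing chains running in opposite directions around the $s$-cycle $G^{(2)}$ --- wait, we only have a gap of one vertex, so actually one neighbor in $C_s$ suffices. Let me restate: with $(1,2)$ the unique white vertex of column $2$ and all of column $1$ black, the vertex $(2,2)$ has white neighbors $(1,2)$ and $(2,3)$ --- two of them --- so no immediate force. The fix is to instead take $Z_0$ asymmetrically: black columns $1$ and $2$ minus $(1,2)$, plus observe that $(1,1) \to (1,2)$ fails but some vertex in column $2$ adjacent to $(1,2)$ along $C_s$, having its other $C_s$-neighbor and its column-$1$ partner black and its column-$3$ partner to be handled by the sweep... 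The cleanest route, which I expect to be the crux, is: do the forward sweep first on columns $3, 4, \dots, t=s$ using the intact column~$2$-and-column~$1$ data on that side (columns $1,2$ minus one vertex still let the sweep start if the deleted vertex is chosen not to block it), wrap around, and have the last column force back into $(1,2)$ from the column-$t$ side; oddness of $s$ is what makes the parity of the wrap-around work out so that exactly the one missing vertex is the last to be filled. I would organize $(1)$ as: (a) specify $Z_0$ of size $2s-1$ precisely, (b) show the forward sweep fills columns $3$ through $s$, (c) show the wrap-around plus the two intact near-columns force the remaining vertex, invoking $s$ odd at step (c).

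The main obstacle is case $(1)$: getting the $2s-1$ set to actually force requires choosing the deleted vertex and the sweep direction so that the single ``hole'' is filled by the propagation rather than blocking it, and checking that at no intermediate step a vertex has two white neighbors. Cases $(2)$ and $(3)$ are routine two-column sweeps (indeed $(2)$ is subsumed by $(3)$), so essentially all the real content is the parity trick for odd $s$.
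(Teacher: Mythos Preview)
Your treatment of $(2)$ and $(3)$ is correct and matches the paper's: both are immediate from Proposition~\ref{prop 2.5 of [1]} with $Z(C_s)=2$ (or, equivalently, from the two-adjacent-copies sweep you describe), and indeed $(2)$ is just the $t=s$ instance of the same argument.

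For $(1)$ you have the right candidate set --- two adjacent copies of $C_s$ with one vertex removed --- but your outlined verification does not work. Step~(b) fails outright: with columns $1,2$ black except $(1,2)$, the vertex $(2,2)$ has two white neighbors $(1,2)$ and $(2,3)$, and $(s,2)$ has two white neighbors $(1,2)$ and $(s,3)$; so after one step column~$3$ is black only at rows $3,\dots,s-1$. The gap widens by two rows with each successive column, so the ``forward sweep'' never produces a full column, and there is far more than one vertex left for step~(c) to handle. No choice of the deleted vertex rescues this, since by vertex-transitivity of $C_s\square C_s$ all such configurations are isomorphic; in particular your ``chosen not to block it'' hope cannot be realized.

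The paper's argument runs orthogonally to your sweep. Take rows $1$ and $2$ black and delete the \emph{middle} vertex $\bigl(1,\tfrac{s+1}{2}\bigr)$ of row~$1$ --- oddness of $s$ is used precisely here, to give a unique middle column. Now let the forcing run: row~$2$ pushes into row~$3$, row~$1$ pushes into row~$s$, and so on, with the hole spreading symmetrically outward from column~$\tfrac{s+1}{2}$ in each newly reached row. Because columns $1$ and $s$ (which are adjacent in $C_s$) are each at cycle-distance $\tfrac{s-1}{2}$ from the hole, they become entirely black after $\tfrac{s-1}{2}$ steps, before the spreading gap reaches them. At that moment one has two adjacent full copies of $C_s$, now in the column direction, and the routine two-column sweep from $(2)$--$(3)$ finishes the job. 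The idea you are missing is that the $2s-1$ black vertices do not force along their own direction; instead they manufacture a pair of adjacent black $C_s$'s in the perpendicular direction.
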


\begin{proof}
Since $Z(C_m)=2$ for any $m \ge 3$, parts (2) and (3) of the
conclusion follow immediately from Proposition \ref{prop 2.5 of [1]}; so we
only need to show part (1) of the conclusion. It's obvious that
$2s$ many black vertices on two adjacent cycles ($C_s$) form a
zero forcing set. It thus suffices to show that starting with
$2s-1$ black vertices on two adjacent cycles, after finitely many
applications of the color-change rule, one obtains two adjacent
cycles as a subset of the set of black vertices. This can be seen
as follows: Label the $s^2$ vertices on $C_s\square C_s$ by
$(i,j)$, where $1\leq i,\,j\leq s$. Take as the initial set of
black vertices $\{(i,j): 1\leq i\leq 2 \mbox{ and } 1\leq j\leq
s\} \setminus \{(1,\frac{s+1}{2})\}$ (recall that $s$ is odd). One can readily check (see Figure \ref{5,5}) that after $\frac{s-1}{2}$ applications of the color change
rule, the two adjacent cycles $\{(i,j): 1\leq i\leq s \mbox{ and }
j\in\{1,s\}\}$ will consist of only black vertices.
\end{proof}

\begin{figure}[htbp]
\begin{center}
\scalebox{0.45}{\input{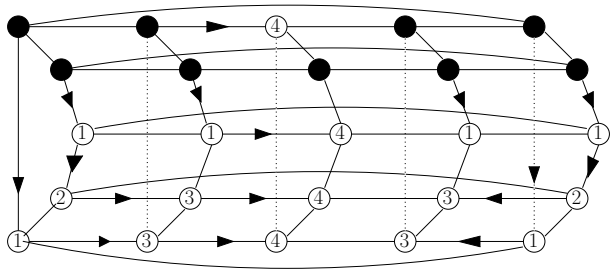}} \caption{The set of
black vertices is a zero forcing set $Z_0$ of $C_5 \square C_5$, the
number $m$ in each vertex indicates that the vertex is in $Z_m$, and the arrows indicate possible forcing chains corresponding to $Z_0$.
}\label{5,5}
\end{center}
\end{figure}

\textbf{Remark 3.} Note that $Z(C_3 \square C_3)=Z(K_3 \square K_3)=5$ and $I(C_3
\square C_3)=I(K_3 \square K_3)=2$ by Theorem \ref{theorem on KK}.
Also note that $Z(C_3 \square C_4)=Z(K_3 \square C_4)=6$ and
$I(C_3 \square C_4)=I(K_3 \square C_4) =
\lceil\frac{4-2}{2}\rceil=1$ by (iv) of Theorem \ref{T1}.
Further, one can check that $Z(C_4 \square C_4)=8$ and $I(C_4
\square C_4)=1$. Thus we have the following\\

\textbf{Conjecture.} Let $t > s \ge 3$. Then\begin{equation*} \left\{
\begin{array}{ll}
Z(C_s \square C_s) = 2s-1 & \mbox{ if $s$ is odd }\\
Z(C_s \square C_s) = 2s & \mbox{ if $s$ is even }\\
Z(C_s \square C_t) = 2s \ .
\end{array} \right.
\end{equation*}


\section{Upper bounds of iteration index of triangular grids and
and king grids}

The \textit{triangular grid graph}, denoted by $P_s \specsym P_t$, can be
obtained from the grid graph $P_s \square P_t$ by adding a
diagonal edge of negative slope to each $C_4$ square.
In \cite{B2}, it was shown that $Z(P_s
\specsym P_t)=s$ if $t \ge s \ge 2$. Figure \ref{P4-10(ex)} shows
a zero forcing set of $P_4 \specsym P_{10}$ and its forcing chain,
where the set of black
vertices is a zero forcing set $Z_0$ and the number $m$ in each vertex indicates that the vertex is
in $Z_m$.\\

\begin{figure}[htbp]
\begin{center}
\scalebox{0.45}{\input{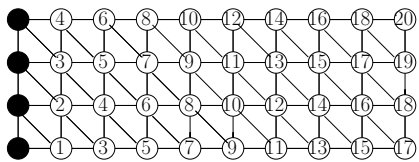}} \caption{The $4 \times
10$ triangular grid graph and its zero forcing
chain}\label{P4-10(ex)}
\end{center}
\end{figure}

\begin{Thm}
For $t \ge s \ge 2$, $I(P_s \specsym P_t) \leq 2t+s-4$.
\end{Thm}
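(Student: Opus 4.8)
The plan is to produce a single minimum zero forcing set of $P_s\specsym P_t$ whose iteration index is at most (in fact exactly) $2t+s-4$. Label the vertices by $(r,j)$, with $1\le r\le s$ the row and $1\le j\le t$ the column, so that — consistently with Figure~\ref{P4-10(ex)} — the diagonal added to the unit square on rows $\{r,r+1\}$ and columns $\{j,j+1\}$ is the edge $(r+1,j)(r,j+1)$; thus
$$N((r,j))=\bigl\{(r{-}1,j),\,(r{+}1,j),\,(r,j{-}1),\,(r,j{+}1),\,(r{-}1,j{+}1),\,(r{+}1,j{-}1)\bigr\}$$
intersected with the set of valid indices (so interior vertices have degree $6$). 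Since $Z(P_s\specsym P_t)=s$ by~\cite{B2}, the ``side column'' $Z_0=\{(r,1):1\le r\le s\}$ is a $Z(P_s\specsym P_t)$-set, and it suffices to show $I_{Z_0}(P_s\specsym P_t)\le 2t+s-4$.

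Assign to each vertex a ``time stamp'': $f(r,1)=0$ for the starting column, and $f(r,j)=2j+r-4$ for $2\le j\le t$. Put $B_k=\{(r,j):f(r,j)\le k\}$, so that $B_0=Z_0$, $B_k\subseteq B_{k+1}$, and $B_{2t+s-4}=V(P_s\specsym P_t)$ (the maximum of $f$ is attained at $(s,t)$, with value $2t+s-4$). The heart of the argument is the claim that, for each $k\ge1$, one global application of the color-change rule to $B_{k-1}$ turns every vertex of $B_k$ black; explicitly, for each column $j\ge2$ for which $r:=k-2j+4\in\{1,\dots,s\}$ we realize the force $(r,j-1)\to(r,j)$. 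Verifying that this force is legal relative to $B_{k-1}$ is a one-line computation with $f$: the forcing vertex $(r,j-1)$ has time stamp $k-2$ (or lies in column $1$), hence is in $B_{k-1}$; the target $(r,j)$ has time stamp $k$, hence is still white; and the remaining neighbours of $(r,j-1)$ — among $(r{-}1,j{-}1)$, $(r{+}1,j{-}1)$, $(r,j{-}2)$, $(r{+}1,j{-}2)$, $(r{-}1,j)$, those that exist — all have time stamp $\le k-1$ (the values occurring are $k-1$, $k-3$, $k-4$, or $0$), so all of them already lie in $B_{k-1}$. The column $j=2$ is the simplest case ($(r,1)$ forces $(r,2)$, its unique white neighbour, since the diagonal neighbour $(r-1,2)$ has time stamp $r-1=k-1$), and $(1,1)\to(1,2)$ at step $1$ since $N((1,1))=\{(2,1),(1,2)\}$; along the boundary rows $r=1$ and $r=s$ some of the listed neighbours simply disappear, which only helps.

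To finish, note that any set obtained from $(\chi_{Z_0}^{k-1})^{-1}(0)$ by performing a subset of the currently available forces is contained in $(\chi_{Z_0}^{k})^{-1}(0)$; since $B_0=(\chi_{Z_0}^0)^{-1}(0)$, the claim together with a trivial induction gives $B_k\subseteq(\chi_{Z_0}^k)^{-1}(0)$ for all $k$, so $(\chi_{Z_0}^{2t+s-4})^{-1}(0)=V(P_s\specsym P_t)$, i.e.\ $I_{Z_0}(P_s\specsym P_t)\le 2t+s-4$. As $|Z_0|=s=Z(P_s\specsym P_t)$, this yields $I(P_s\specsym P_t)\le 2t+s-4$. (One can also check that no vertex becomes black before its time stamp, so in fact $B_k=(\chi_{Z_0}^k)^{-1}(0)$ and $I_{Z_0}=2t+s-4$ exactly, matching the labelling in Figure~\ref{P4-10(ex)}; this refinement is not needed for the theorem.) I expect the only slightly delicate point to be the per-neighbour bookkeeping in the inductive step, where the diagonal edges make the local picture a bit crowded, especially near the two boundary rows — but every such check reduces to the arithmetic of $f$, so it poses no genuine obstacle.
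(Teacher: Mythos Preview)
Your proof is correct and follows essentially the same strategy as the paper's: both take the first column as $Z_0$, assign the time function $2(\text{column})+(\text{row})-\text{const}$ (your $f(r,j)=2j+r-4$ is the paper's $2i+j-1$ after the coordinate shift), and verify by induction that the same-row neighbour in the previous column forces each vertex at its assigned time by checking that all its other neighbours carry strictly smaller time stamps. Your write-up is somewhat more explicit about the sets $B_k$ and the boundary cases, but the argument is the same.
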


\begin{proof}
Refer to (A) of Figure \ref{P4-10} for the labeling of vertices.
It's known that $Z(P_s \specsym P_t) =s$ for $2\leq s\leq t$. Take
$Z_0=\{(0,0),(0,1),\ldots,(0,s-1)\}$. We'll show that the vertex
$(i,j)\in Z_{2i+j-1}$ for $(i,j)\notin Z_0$: the theorem would
then follow since the range of the function $n=n(i,j)=2i+j-1$ over
the lattice $\Lambda=\{1,2,\ldots,t-1\}\times\{0,1,\ldots, s-1\}$
for $2\leq s\leq t$ is the set $\{1,2,\ldots,2t+s-4\}$. We'll
induct on $n\in \{1,2,\ldots,2t+s-4\}$.

\vspace{.1in}

We prove by strong induction. Let $n=1$. The only solution to $2i+j-1=1$ for
$(i,j)\in\Lambda$ is $(1,0)$. One sees immediately that $(1,0) \in Z_1$, since it's forced by $(0,0)\in Z_0$.

\vspace{.1in}

Suppose $(i,j)\in Z_{n=2i+j-1}$ for all
$(i,j)\in\Lambda$ such that $1\leq 2i+j-1<n_0$, where $2\leq
n_0\leq 2t+s-4$. We need to show, for $(i,j)$ with $2i+j-1=n_0$,
that $(i,j)\in Z_{n_0}$. Now, $(i,j)\in Z_{n_0}$ (``white vertex"
$(i,j)$ is turned ``black" in or before the $n_0$-th
iteration) if $(i,j)$ has a neighbor (``black vertex") $(i',j')$
such that $(x,y)\in N[(i',j')] \setminus \{(i,j)\}$ implies $2x+y-1 \leq n_{0}-1$
(i.e., the vertex $(x,y)$ has been turned ``black" in or before
the $(n_{0}-1)$-th iteration). We claim that the vertex $(i',j')$ may be
taken to be $(i-1,j)$: (B) of Figure \ref{P4-10} shows the local
picture where $|N[(i-1,j)]|=7$, the maximum possible; it's
trivially checked that $2x+y-1<n_0$ for any $(x,y)\in
N[(i-1,j)] \setminus \{(i,j)\}$.
\end{proof}

\begin{figure}[htbp]
\begin{center}
\scalebox{0.43}{\input{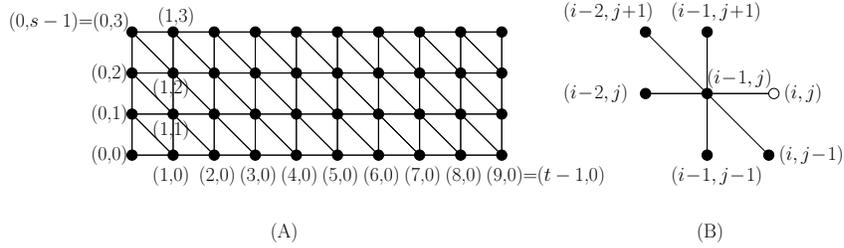}} \caption{The labeling of $4
\times 10$ triangular grid graph and $N[(i-1,j)]$}\label{P4-10}
\end{center}
\end{figure}

The \textit{king grid graph}, denoted by $P_s \boxtimes P_t$,  can be
obtained from the grid graph $P_s \square P_t$ by adding both
diagonal edges to each $C_4$ square. In \cite{AIM}, it was shown that $Z(P_s \boxtimes P_t)=s+t-1$ for $s, t \ge 2$. Figure \ref{king} shows $P_4 
\boxtimes P_{10}$ and
$P_3 \boxtimes P_{10}$, along with a zero forcing set and its forcing chain for each graph.
\begin{figure}[htbp]
\begin{center}
\scalebox{0.45}{\input{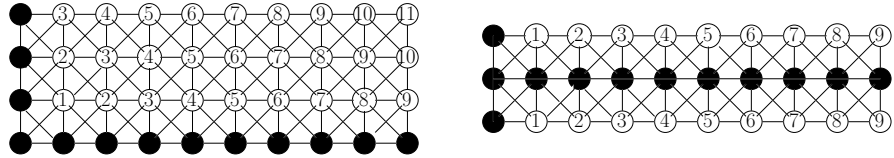}} \caption{$P_4 \boxtimes P_{10}$ and $P_3
\boxtimes P_{10}$, together with a zero forcing set for each graph: the number  $m$ in
each vertex indicates that the vertex is
in $Z_m$.}\label{king}
\end{center}
\end{figure}

\begin{Thm}
For $s,t \ge 2$, $I(P_s \boxtimes P_t) \le s+t-3$. In fact,
$I(P_3 \boxtimes P_t) \le t-1$ for $t \ge 2$.
\end{Thm}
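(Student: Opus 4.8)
The plan is to establish each of the two inequalities by exhibiting an explicit minimum zero forcing set and bounding the number of global color-change steps it requires. Label the vertices of $P_s \boxtimes P_t$ by pairs $(i,j)$ with $1\le i\le s$ and $1\le j\le t$, so that $(i,j)$ is adjacent to $(i',j')$ exactly when $\max\{|i-i'|,|j-j'|\}=1$, and recall from \cite{AIM} that $Z(P_s\boxtimes P_t)=s+t-1$. For the bound $I(P_s\boxtimes P_t)\le s+t-3$, I would take $Z_0$ to be the ``L-shaped'' set $\{(1,j):1\le j\le t\}\cup\{(i,1):1\le i\le s\}$ (the first row together with the first column); then $|Z_0|=s+t-1$, so $Z_0$ is a $Z(P_s\boxtimes P_t)$-set as soon as it is shown to be a zero forcing set. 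Put $f(i,j)=0$ if $i=1$ or $j=1$, and $f(i,j)=i+j-3$ otherwise. The heart of the argument is the claim that $(i,j)\in Z_{f(i,j)}$ for every vertex $(i,j)$; since $\max_{(i,j)} f(i,j)=s+t-3$ (attained at $(s,t)$), this makes $Z_0$ a zero forcing set and yields $I(P_s\boxtimes P_t)\le I_{Z_0}(P_s\boxtimes P_t)\le s+t-3$.

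I would prove the claim by strong induction on $i+j$. If $i=1$ or $j=1$, then $(i,j)\in Z_0$ and there is nothing to prove. Otherwise $2\le i\le s$ and $2\le j\le t$; take the prospective forcing vertex to be $(i-1,j-1)$, a neighbor of $(i,j)$ that lies in the grid. It suffices to check that every $(a,b)\in N[(i-1,j-1)]\setminus\{(i,j)\}$ satisfies $f(a,b)\le i+j-4$, for then $\chi_{Z_0}^{i+j-4}((a,b))=0$ for all such $(a,b)$, and the color-change rule gives $\chi_{Z_0}^{i+j-3}((i,j))=0$. Any such $(a,b)$ has $i-2\le a\le i$, $j-2\le b\le j$, $1\le a\le s$, $1\le b\le t$, and $(a,b)\ne(i,j)$, so $a+b\le i+j-1$. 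If $a=1$ or $b=1$, then $f(a,b)=0\le i+j-4$ (using $i,j\ge 2$); otherwise $a+b<i+j$ lets me invoke the inductive hypothesis and $f(a,b)=a+b-3\le i+j-4$. The base case $(i,j)=(2,2)$ is immediate, since $(1,1)\in Z_0$ and $N[(1,1)]\setminus\{(2,2)\}\subseteq Z_0$.

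For the sharper bound $I(P_3\boxtimes P_t)\le t-1$, I would instead use $Z_0'=\{(1,1),(2,1),(3,1)\}\cup\{(2,j):2\le j\le t\}$, the first column together with the entire middle row; then $|Z_0'|=3+(t-1)=t+2=Z(P_3\boxtimes P_t)$. The key sub-claim, proved by induction on $k\in\{1,\dots,t\}$, is that $Z_{k-1}\supseteq\{(i,j):1\le i\le 3,\ 1\le j\le k\}\cup\{(2,j):1\le j\le t\}$; that is, after $k-1$ steps the first $k$ columns and the whole middle row are black. The base $k=1$ holds since $Z_0'$ already contains column $1$ and the middle row. For the inductive step, once columns $1,\dots,k$ and the middle row are black, the neighbors of $(1,k)$ in $P_3\boxtimes P_t$ are $(1,k-1),(1,k+1),(2,k-1),(2,k),(2,k+1)$, all black except $(1,k+1)$, so $(1,k)$ forces $(1,k+1)$; symmetrically $(3,k)$ forces $(3,k+1)$, turning column $k+1$ black. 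Taking $k=t$ gives $Z_{t-1}=V(P_3\boxtimes P_t)$, so $Z_0'$ is a minimum zero forcing set and $I(P_3\boxtimes P_t)\le I_{Z_0'}(P_3\boxtimes P_t)\le t-1$.

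I expect the only genuinely fiddly part to be the boundary bookkeeping in the induction for the first bound: one must confirm that the forcer $(i-1,j-1)$ always lies in the grid and that, when a coordinate of $(i,j)$ equals $2$ (or equals $s$ or $t$), every neighbor of $(i-1,j-1)$ that needs to be black is covered either by membership in $Z_0$ or by the inductive hypothesis. As shown above this reduces to the single numeric inequality $a+b\le i+j-1$, so it is routine rather than hard. One final point to keep in mind is that in the $\chi$-formulation of the color-change rule a single black vertex may force several of its white neighbors simultaneously; this is what legitimizes both the simultaneous forcing in the $P_3\boxtimes P_t$ argument and the repeated use of $(i-1,j-1)$ as a forcer in the general case, with no ``active/inactive'' accounting needed.
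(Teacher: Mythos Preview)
Your proof is correct and follows essentially the same approach as the paper's: both parts use the same zero forcing sets (the L-shaped first-row-plus-first-column for the general bound, with forcer $(i-1,j-1)$ and the level function $i+j-\text{const}$; the first-column-plus-middle-row for $P_3\boxtimes P_t$), differing only in coordinate conventions. One minor correction to your closing remark: it is \emph{not} true that in the $\chi$-formulation a single black vertex may force several white neighbors simultaneously---if $u$ has two white neighbors then neither is the unique white neighbor of $u$, so $u$ forces nothing; fortunately your argument never actually uses this claim (in each step distinct forcers $(i-1,j-1)$ act on distinct targets $(i,j)$, and in the $P_3\boxtimes P_t$ case $(1,k)$ and $(3,k)$ are different vertices), so the proof stands once the remark is deleted.
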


\begin{proof}
Refer to Figure \ref{P4x10} for the labeling of vertices. Noting
$Z(P_s \boxtimes P_t)=s+t-1$, let $Z_0=(\cup_{j=0}^{s-1}\{(0,j)\})
\cup (\cup_{i=1}^{t-1}\{(i,0)\})$. We'll show that the vertex $(i,j) \in Z_{n=i+j-1}$ for $(i,j) \not\in Z_0$: the first assertion of the theorem would
then follow
since the range of the function $n=n(i,j)=i+j-1$ over the lattice $\Lambda=\{1,2,\ldots,t-1\}\times\{1,\ldots, s-1\}$
for $s, t \geq 2$ is the set $\{1,2,\ldots, s+t-3\}$. We'll induct on $n\in \{1,2,\ldots, s+t-3\}$. \\

We prove by strong induction. Let $n=1$. The only solution to $i+j-1=1$ for $(i,j) \in \Lambda$ is $(1,1)$. One sees immediately that $(1,1) \in Z_1$, since it's forced by 
$(0,0)\in Z_0$.\\

Suppose $(i,j)\in Z_{n_{0}-1}$ for all $(i,j)\in \Lambda$ such that $1\leq i+j-1 \leq n_{0}-1$, where $2\leq n_0 \leq 
s+t-3$.
We need to show, for $(i,j)$ with $i+j-1=n_0$, that $(i,j)\in Z_{n_0}$. Notice $(i-1, j-1)$ is adjacent to $(i,j)$, and it suffices to show that  
$(x,y)\in N[(i-1,j-1)] \setminus \{(i,j)\}$ implies $x+y-1<n_0$.
But this is obvious --- in view of the coordinates assigned to the vertices.
\vspace{.1in}

Next, consider the particular case of $P_3 \boxtimes P_t$ for $t \ge 2$. If we take
$Z_0=(\cup_{i=0}^{t-1}\{(i,1)\}) \cup \{(0,0),(0,2)\}$ with $|Z_0|=t+2$, then, for each $0 \le i \le t-2$, we have that $(i, 0) \rightarrow (i+1, 0)$ and 
$(i, 2) \rightarrow (i+1, 2)$.
Hence $Z_{t-1}=V(P_3 \boxtimes P_t)$; i.e., $I(P_3 \boxtimes P_t) \le t-1$.
\end{proof}

\begin{figure}[htbp]
\begin{center}
\scalebox{0.45}{\input{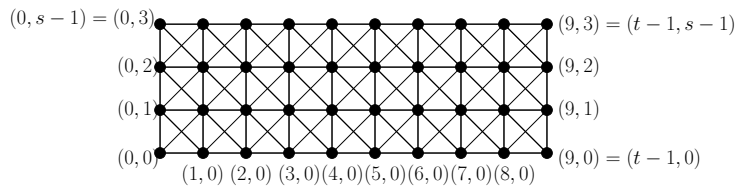}} \caption{The king grid
graph $P_4 \boxtimes P_{10}$}\label{P4x10}
\end{center}
\end{figure}


\section{Zero forcing number and iteration index of a bouquet of circles}

The bouquet of circles -- the figure $8$, in particular -- has been studied as a motivating example to introduce the fundamental group on a graph (see 
p.189, \cite{massey}). More recently, Llibre and Todd \cite{bouquet}, for instance, studied a class of maps on a bouquet of circles from a dynamical 
system perspective.\\

For $2 \le k_1 \le k_2 \le \ldots \le k_n$, let $B_n=(k_1, k_2,
\ldots, k_n)$ be a bouquet of $n \ge 2$ circles $C^1$, $C^2$,
$\ldots$, $C^n$, with the cut-vertex $v$, where $k_i$ is the
number of vertices of $C^i- \{v\}$ ($1 \le i \le n$). (The $n=1$
case has already been addressed.) Let $V(C^i)=\{v, w_{i,1}, w_{i,2},
\ldots, w_{i, k_i}\}$ such that $vw_{i,1} \in E(B_n)$ and $vw_{i,
k_i} \in E(B_n)$, and let the vertices in $C^i$ be cyclically
labeled, where $1 \le i \le n$. See Figure \ref{bouquetF} for
$B_3=(2,3,4)$. Note that $|V(B_n)|=1+ \sum_{i=1}^{n}k_i$, where
$k_i \ge 2$.\\

\begin{figure}[htbp]
\begin{center}
\scalebox{0.45}{\input{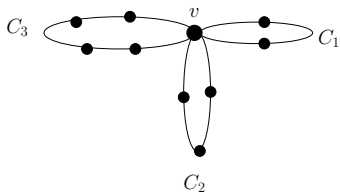}} \caption{A bouquet of three
circles, $B_3=(2,3,4)$}\label{bouquetF}
\end{center}
\end{figure}

\begin{Thm}\label{bouquetZ}
Let $B_n=(k_1, k_2, \ldots, k_n)$ be a bouquet of $n$ circles with
cut-vertex $v$. Then $Z(B_n)=n+1$.
\end{Thm}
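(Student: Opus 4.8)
For the upper bound $Z(B_n) \le n+1$, I would exhibit an explicit zero forcing set of size $n+1$. The natural candidate is the cut-vertex $v$ together with one neighbor of $v$ on each circle, say $Z = \{v\} \cup \{w_{i,1} : 1 \le i \le n\}$. To see this forces: fix a circle $C^i$. The vertex $w_{i,1}$ is black; its neighbors are $v$ (black) and $w_{i,2}$; since $v$ is black, $w_{i,1}$ can force $w_{i,2}$. Then $w_{i,2}$ forces $w_{i,3}$, and so on down the cycle $C^i$, until all of $C^i - \{v\}$ is black. This happens in parallel on every circle, so $Z$ is a zero forcing set and $Z(B_n) \le n+1$.

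For the lower bound $Z(B_n) \ge n+1$, suppose $Z$ is any zero forcing set with $|Z| \le n$. The key structural observation is about the degree of $v$: since $v$ lies on all $n$ circles and each circle contributes two edges at $v$, we have $\deg(v) = 2n$. I would argue as follows. Consider the first time a force occurs. No vertex of degree $\ge 2$ that is not $v$ can force initially unless all but one of its (at most two) neighbors are black — but on a cycle every non-cut vertex has degree exactly $2$, so a non-cut vertex $w_{i,j}$ with $j \notin \{1, k_i\}$ can force only if one of its two cycle-neighbors is black. The real constraint comes from counting: to "enter" each circle $C^i$ and propagate, one essentially needs, for each $i$, either $v \in Z$ plus a neighbor of $v$ on $C^i$, or two appropriately placed vertices on $C^i$ itself, or one vertex on $C^i$ adjacent to a vertex that will be forced from $v$'s side. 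A clean way to make this rigorous: delete $v$; then $B_n - \{v\}$ is a disjoint union of $n$ paths $P^i$ (the arc $w_{i,1}, \ldots, w_{i,k_i}$), and $P(B_n - \{v\}) = n$. I would show that if $|Z| \le n$ and $Z$ zero-forces $B_n$, we can derive a contradiction by tracking how many "independent propagation fronts" are needed — each circle needs its own source of forcing, and $v$, even if black, can only supply a force to one neighbor at a time (once $v$ forces some $w_{i,1}$, $v$ becomes inactive). So $v \in Z$ helps exactly one circle get started "for free"; the remaining $n-1$ circles each require at least one vertex of $Z$ placed on them, and the circle that $v$ jump-starts still needs $v$ itself to be in $Z$ — giving at least $1 + (n-1) + \text{(coverage of last circle)}$. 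Careful bookkeeping of this type yields $|Z| \ge n+1$.

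\textbf{The main obstacle} will be making the lower-bound counting argument airtight, since a black vertex not on a given circle can still indirectly assist forcing on that circle through $v$, and $v$'s single "force action" must be allocated carefully. I expect the cleanest route is: let $Z$ be a zero forcing set and let $Z_i$ be the derived sequence; consider the set $Z \setminus \{v\}$ restricted to each circle, and show that for at least one circle $C^{i_0}$, $Z$ contains no vertex of $C^{i_0} - \{v\}$, which forces $v \in Z$; then observe that $v$ can make at most one force, so it can "launch" propagation into at most one circle — while a circle $C^{i_0}$ with $Z \cap (C^{i_0} - \{v\}) = \emptyset$ can only be entered from $v$. If two or more circles have empty intersection with $Z$, then after $v$ fires into one of them, $v$ is inactive and the others can never be blackened, a contradiction. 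Hence at most one circle avoids $Z$; combined with $v \in Z$ being required in that case, and each of the other $n-1$ circles needing $\ge 1$ vertex of $Z$, a final careful count closes the gap to give $|Z| \ge n+1$, completing the proof.
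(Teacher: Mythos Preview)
Your upper bound is fine and matches the paper exactly.

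Your lower bound plan contains a real error that would cause the argument to fail. You write that if $Z \cap (C^{i_0}-\{v\}) = \emptyset$ then $v$ can ``launch'' propagation into $C^{i_0}$, and that the danger is only when \emph{two or more} circles are empty (since $v$ becomes inactive after one force). This is wrong: because $k_{i_0}\ge 2$, the vertex $v$ has \emph{two} neighbors $w_{i_0,1}$ and $w_{i_0,k_{i_0}}$ on $C^{i_0}$, both white, so $v$ always has at least two white neighbors and can \emph{never} force into $C^{i_0}$. Hence a single empty circle already yields a contradiction; the correct statement is that \emph{every} $C^i-\{v\}$ must meet $Z$ (this is the paper's Claim~1), giving $|Z|\ge n$ immediately.

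But this correction leaves you at $|Z|\ge n$, and your proposal never actually closes the gap to $n+1$; you just assert that ``a final careful count'' does it. The case you are missing is $|Z|=n$, $v\notin Z$, with exactly one vertex of $Z$ on each $C^i-\{v\}$ --- here your pigeonhole step (``for at least one circle $C^{i_0}$, $Z$ contains no vertex of $C^{i_0}-\{v\}$'') is simply false. The paper disposes of this case with a second, independent observation (its Claim~2): since $\delta(B_n)\ge 2$, an initial black set in which no two vertices are adjacent cannot perform any force at all. In the configuration above, no two $Z$-vertices share a circle and $v\notin Z$, so no two are adjacent --- contradiction. Drop the ``$v$ can fire once'' bookkeeping entirely and replace it with these two structural claims; the proof becomes two lines.
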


\begin{proof}
One can readily check that $\{v\} \cup \{w_{i,1} \ | \ 1 \le i \le
n\}$ form a zero forcing set for $B_n$, and thus $Z(B_n) \le n+1$.
To prove the theorem, we need to show $Z(B_n) \ge n+1$. We make
the following claims.

\vspace{.11in}

\emph{Claim 1.} At least one vertex from each $C^i-\{v\}$ ($1
\le i \le n$) belongs to a $Z(B_n)$-set.

\vspace{.08in}

\textit{Proof of Claim 1:} This is clearly true by the assumption
that $k_i\geq 2$ for each $i$ -- one black vertex (namely $v$) on
$C^i$ can not ``force".

\vspace{.11in}

\emph{Claim 2.} Any $Z(B_n)$-set contains a pair of adjacent
vertices on a $C^i$ for some $i$.

\vspace{.08in}

\textit{Proof of Claim 2:} This is because the degree of every
vertex is at least two, and a set of isolated black vertices can
not force.

\vspace{.11in}

By Claims 1 and 2, we have $Z(B_n) \ge n+1$.
\end{proof}

\begin{Thm}
Let $B_n=(k_1, k_2, \ldots, k_n)$ be a bouquet of $n$ circles with
the cut-vertex $v$, where $n \ge 2$ and $k_i \ge 2$ ($1 \le i \le
n$). Then $I(B_n) = \left \lceil\frac{k_n+k_{n-1}}{2}\right\rceil-1$.
\end{Thm}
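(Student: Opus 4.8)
I would prove $I(B_n) = \left\lceil\frac{k_n + k_{n-1}}{2}\right\rceil - 1$ by establishing a matching upper and lower bound. For the upper bound, I plan to exhibit a specific $Z(B_n)$-set and track its iteration index. The natural choice, modelled on the zero forcing set from Theorem \ref{bouquetZ}, is $Z = \{v\} \cup \{w_{i,1} : 1 \le i \le n\}$, but this does not obviously give the right count; a better choice is $Z' = \{w_{i,1}, w_{i,2} : 1 \le i \le n-2\} \cup \{w_{n-1,1}, w_{n,1}, w_{n,2}\}$ or some variant with adjacent pairs placed so that each cycle $C^i$ with small $k_i$ gets forced quickly, while the ``bottleneck'' is the largest cycle $C^n$ together with $C^{n-1}$. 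The key observation is: once $v$ becomes black, each remaining cycle $C^i - \{v\}$ is filled in from both ends, taking $\lceil (k_i - (\text{already black on }C^i))/2 \rceil$ further steps; and $v$ itself becomes black only after some cycle is completely traversed from one of its $w_{i,1}$-neighbors. I would compute the iteration index of the proposed set explicitly in terms of the $k_i$, checking that the dominant term is $\lceil (k_n + k_{n-1})/2 \rceil - 1$ (roughly: traverse $C^{n-1}$ to reach $v$, costing about $k_{n-1}$ steps if only one end-vertex of $C^{n-1}$ is black, then fill $C^n$ from both ends, costing about $k_n/2$ — and balancing these by placing black vertices cleverly gives the stated bound).

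\textbf{Lower bound.} This is the harder direction and I expect it to be the main obstacle. I would argue that \emph{no} $Z(B_n)$-set can finish faster. By Claims 1 and 2 in the proof of Theorem \ref{bouquetZ}, any $Z(B_n)$-set $Z$ has exactly $n+1$ vertices, with at least one on each $C^i - \{v\}$ and at least one adjacent pair; a counting argument shows the configuration is essentially forced — one cycle carries two black vertices (either an adjacent pair, or one vertex plus $v$), and every other cycle carries exactly one black vertex. I would do a case analysis on whether $v \in Z$ and on which cycle holds the ``extra'' vertex. The core estimate is a \emph{propagation-speed lemma}: on a path or cycle, black vertices spread at most one position per global iteration from each black ``front.'' Hence a cycle $C^i$ with only a single black vertex $w_{i,\ell}$ (and $v$ still white) contributes nothing until $v$ is forced; and to force $v$, some cycle must be entirely traversed, which by the speed lemma takes at least (number of white vertices on that cycle) iterations if traversal proceeds from one side, or half that if from two sides. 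Then, after $v$ is black, the cycle $C^n$ (which has at most two black non-$v$ vertices, giving at most three fronts counting $v$) needs at least $\lceil (k_n - 2)/2 \rceil$ or $\lceil (k_n-1)/2\rceil$ more iterations depending on the configuration. Summing the unavoidable ``reach $v$'' cost plus the ``fill $C^n$'' cost and minimizing over all admissible configurations, I aim to show the total is at least $\lceil (k_n + k_{n-1})/2 \rceil - 1$, with the minimum achieved by sacrificing one of the two smallest... no — one must check carefully that the two cycles whose filling-from-$v$ times add up are precisely $C^{n-1}$ and $C^n$ (the two largest), because one of them must be used to ``reach $v$'' and the other is the slowest to fill afterward, and any attempt to use a smaller cycle for reaching $v$ leaves $C^n$ untouched longer.

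\textbf{Organizing the case analysis.} Concretely I would: (1) state and prove the propagation-speed lemma (a white vertex at distance $d$ along a path from the nearest initially-black vertex, with no other black vertex on that side, is not black before iteration $d$); (2) observe that before $v$ is black, the cycles behave independently, and $v$ is forced at iteration $t_v$ where $t_v$ equals the minimum over $i$ of the time to traverse $C^i$ from its black vertices — and in the optimal set this is minimized by putting the adjacent pair on a \emph{small} cycle... but then $C^n$ has only one black vertex, so after $v$ becomes black at time $t_v$, filling $C^n$ from $v$ and from $w_{n,\ell}$ takes at least $\lceil(k_n - 1)/2\rceil$ more — wait, this needs the two fronts on $C^n$, namely $v$ and the single black $w_{n,\ell}$, to be positioned adversarially, which the prover of the lower bound does not get to choose; so the bound is $t_v + \lceil(k_n-1)/2\rceil$ in the worst intra-cycle placement but the \emph{best} placement of the single black vertex on $C^n$ is its midpoint... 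I would carefully separate ``adversary picks $Z$'' (to minimize $I_Z$) from ``we must lower-bound $I_Z$ for every such $Z$,'' and verify that whatever $Z$ the adversary picks, $I_Z(B_n) \ge \lceil (k_n + k_{n-1})/2 \rceil - 1$; (3) finally, confirm the upper-bound set from the first paragraph actually realizes equality, completing the proof. The delicate point throughout is the interaction between the time to activate $v$ and the subsequent filling of the two largest cycles — getting the floors and ceilings to match exactly is where I expect the real work to lie.
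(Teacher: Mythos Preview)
Your upper-bound plan contains a concrete error. The set $Z = \{v\} \cup \{w_{i,1} : 1 \le i \le n\}$ that you dismiss as not ``obviously giv[ing] the right count'' is exactly the set the paper uses, and a short computation shows $I_Z(B_n) = \lceil (k_n + k_{n-1})/2 \rceil - 1$: each $w_{i,1}$ forces along $C^i$ one vertex per step, so $C^i$ is entirely black at step $k_i - 1$; once $C^{n-1}$ is filled (step $k_{n-1}-1$) the vertex $v$ has a unique remaining white neighbor $w_{n,k_n}$ and begins forcing $C^n$ from the other end as well. Your proposed replacement $Z'$ has $2(n-2)+3 = 2n-1$ elements, so for $n \ge 3$ it is not a $Z(B_n)$-set at all and cannot be used to bound $I(B_n)$. (Your heuristic ``traverse $C^{n-1}$ to reach $v$, about $k_{n-1}$ steps, then fill $C^n$ from both ends, about $k_n/2$'' also sums to roughly $k_{n-1} + k_n/2$, not $(k_{n-1}+k_n)/2$, which is a symptom of the same miscount.)

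For the lower bound, your propagation-speed-plus-$v$-timing program could in principle be pushed through, but the repeated self-corrections in your sketch show it is not closed, and the paper's route is much simpler. The idea you are missing: restrict attention to the induced subgraph on $V(C^{n-1} \cup C^n)$, which has $k_{n-1}+k_n+1$ vertices. By Claims~1 and~2 of Theorem~\ref{bouquetZ} together with $|Z_0| = n+1$, any $Z(B_n)$-set meets this subgraph in at most three vertices; a short argument (the pair on one of the two cycles cannot force into the other cycle until $v$ is black, and $v$ cannot force while it still has two white neighbors on that other cycle) shows that at most two vertices of $\langle V(C^{n-1} \cup C^n)\rangle$ turn black per global iteration. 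Hence $I_{Z_0}(B_n) \ge \lceil (k_{n-1}+k_n-2)/2 \rceil = \lceil (k_n+k_{n-1})/2 \rceil - 1$ directly --- no optimization over adversary placements, no propagation-speed lemma, and no separate ``time to reach $v$'' bookkeeping.
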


\begin{proof}
Let $Z_0$ be a $Z(B_n)$-set. First, assume that $Z_0$ contains the cut vertex $v$. We show $I_{Z_0}(B_n) = \left 
\lceil\frac{k_n+k_{n-1}}{2}\right\rceil-1$.
Of course, $I(B_n)\leq I_{Z_0}(B_n)$ by definition.\\

Notice that there is a unique $Z(B_n)$-set containing the
cut-vertex $v$ up to isomorphism of graphs. We take $Z_0=\{v\} \cup \{w_{i,1} \ |
\ 1 \le i \le n\}$. The presence of $v$ in $Z_0$ ensures that the
entire bouquet will be turned black as soon as the vertices in the longest cycle
$C^n$ are turned black. If $k_n \le k_{n-1}+1$, then the white
vertices of $C^n$ are turned black one at a time, and thus
$I_{Z_0}(B_n)=k_n-1=\lceil\frac{k_n+k_{n-1}}{2}\rceil-1$. If $k_n
\ge k_{n-1}+2$, then the white vertices of $C^n$ are turned black
one at a time until $(k_{n-1}-1)$-th step and two at a time
thereafter. This means that $V(B_n) \setminus \{w_{n,(k_{n-1})+1}, w_{n,
(k_{n-1})+2}, \ldots, w_{n, k_n}\}$ belongs to $Z_{(k_{n-1})-1}$,
and $1 \le |Z_{x+1}-Z_x| \leq 2$ for $x \ge k_{n-1}$. ($|Z_{x+1}-Z_x|$ may be less than 2 only if $x+1=I_{Z_0}(B_n)$.) Thus,
$I_{Z_0}(B_n)=k_{n-1}-1+\lceil\frac{k_n-k_{n-1}}{2}\rceil=\lceil\frac{k_n+k_{n-1}}{2}\rceil-1$. \\

Second, we show that if $Z_0$ does not contain $v$, then  $I_{Z_0}(B_n) \geq \left \lceil\frac{k_n+k_{n-1}}{2}\right\rceil-1$. \\

By Claims 1 and 2 in the proof of Theorem \ref{bouquetZ}, we have $2 \le |Z_0 \cap V(C^{n-1} \cup C^n)| \le 3$.
We note that the entire bouquet will not be turned black until all vertices in $C^{n-1} \cup C^n$ are turned black.
If $|Z_0 \cap V(C^{n-1} \cup C^n)|=2$, then $|Z_0 \cap V(C^{n-1})|=|Z_0 \cap V(C^n)|=1$, and forcing on $C^{n-1} \cup C^{n}$ can not start until $v$ 
is turned black. Since $v \in Z_m$ for some $m \ge 1$, by the same argument as in the upper bound case, we have
$I_{Z_0}(B_n) \ge m+\lceil\frac{k_n+k_{n-1}}{2} \rceil-1 \ge \lceil\frac{k_n+k_{n-1}}{2} \rceil$.
We can thus assume that $|Z_0 \cap V(C^{n-1} \cup C^n)|=3$. Observe that the lower bound is proved if we show that at most two vertices in $<V(C^{n-1} 
\cup C^n)>$ are turned black at a time. \\

WLOG, assume $|Z_0\cap C^{n-1}|=2$, as the other case where $|Z_0\cap C^{n}|=2$ is very similar. Then two vertices in $C^{n-1}$ are turned black at each 
step and no forcing occurs in $C^n$ until the cut-vertex $v$ is turned black. Now, if $v$ is the last vertex on $C^{n-1}$ to be turned black, then it's 
clear that at most two vertices in $<V(C^{n-1} \cup C^n)>$ are turned black at a time -- since, obviously, with any $Z(B_n)$-set at most two vertices are 
turned black on any cycle $C^i$ at each step. On the other hand, if $v$ is turned black before $C^{n-1}$ is turned entirely black, then $v$, being a 
neighbor to at least two white vertices, can not force until $C^{n-1}$ is turned entirely black. We've thus shown that $Z_m \cap V(C^{n-1} \cup C^n)$ 
contains at most two forcing vertices for any $m$ or, equivalently, at most two vertices in $<V(C^{n-1} \cup C^n)>$ are turned black at a time.
\end{proof}

\emph{Acknowledgement.} The authors wish to thank the anonymous referee for some suggestions and corrections.

\end{document}